\pgfplotsset{compat=1.17}
\tikzset{
	smallPosetDiagram/.style={
    	x=1.8cm,
    	y=1.8cm,
    	baseline={([yshift=-0.5ex]current bounding box.center)}
	}
}
\tikzset{
	largePosetDiagram/.style={
    	x=1.6cm,
    	y=1.4cm
	}
}
\tikzset{
	smallGraph/.style={
    	x=0.9cm,
		y=0.9cm,
		every node/.style={circle,draw=black,fill=black,inner sep=0pt,minimum size=5pt},
		label distance=0.145cm,
		line width=0.25mm
	}
}
\tikzset{
	largeGraph/.style={
	  x=0.75cm,
	  y=0.75cm,
	  baseline={([yshift=0.5ex]current bounding box.center)}
	}
}
  \NewDocumentCommand{\maxlength}{}{\__get_max_length:nN}
\newcommand{\myitem}[1]{%
\item[#1]\protected@edef\@currentlabel{#1}%
}
\def\thmCite{\@ifnextchar[{\@with}{\@without}}
\def\@with[#1]#2{{\normalfont\cite[#1]{#2}\;}}
\def\@without#1{{\normalfont\cite{#1}\;}}
\newcommand*{\defeq}{\hspace{-0.04cm}\mathrel{\rlap{%
                     \raisebox{0.4ex}{$\scriptstyle\m@th\cdot$}}%
                     \raisebox{-0.05ex}{$\scriptstyle\m@th\cdot$}}%
				 =}
\DeclareExpandableDocumentCommand{\IfNoValueOrEmptyTF}{mmm}
 {
  \IfNoValueTF{#1}{#2}
   {
    \tl_if_empty:nTF {#1} {#2} {#3}
   }
 }
\newcommand*{\valOrBlank}[1]{\IfNoValueOrEmptyTF{#1}{}{#1}}
\NewDocumentCommand{\myOplus}{e{_^}}{%
	\hspace{0.025cm}\bigoplus_{\mathclap{\valOrBlank{#1}}}^{\mathclap{\valOrBlank{#2}}}\hspace{0.025cm}%
}
\NewDocumentCommand{\mySum}{e{_^}}{%
	\sum_{\mathclap{\valOrBlank{#1}}}^{\mathclap{\valOrBlank{#2}}}\hspace{0.025cm}%
}
\NewDocumentCommand{\myCap}{e{_^}}{%
	\bigcap_{\mathclap{\valOrBlank{#1}}}^{\mathclap{\valOrBlank{#2}}}\hspace{0.05cm}%
}
\NewDocumentCommand{\myCup}{e{_^}}{%
	\bigcup_{\mathclap{\valOrBlank{#1}}}^{\mathclap{\valOrBlank{#2}}}\,%
}
\tikzset{square/.style={regular polygon,regular polygon sides=4}}
\theoremstyle:=definition,remark,plain\do{%
	\expandafter\g@addto@macro\csname th@\theoremstyle\endcsname{%
		\addtolength\thm@preskip\parskip
	}%
}
\let\oldproof\proof
\def\proof{\oldproof\unskip}
\def\footnoterule{
  \hrule \@width 9cm \kern 3\p@} 
\DeclareMathOperator{\ara}{ara}
\DeclareMathOperator{\cd}{cd}
\DeclareMathOperator{\depth}{depth}
\DeclareMathOperator{\lastGrade}{end}
\DeclareMathOperator{\reg}{reg}
\DeclarePairedDelimiter{\smallAbs}{|}{|}
\newtheorem{theorem}{Theorem}[section]
\newtheorem{lemma}[theorem]{Lemma}
\newtheorem{corollary}[theorem]{Corollary}
\newtheorem{proposition}[theorem]{Proposition}
\newtheorem{definition}[theorem]{Definition}
\newtheorem{notation}[theorem]{Notation}
\theoremstyle{definition}
\newtheorem{example}[theorem]{Example}
\theoremstyle{remark}
\newtheorem*{note}{Note}
\newcommand{\blockComment}[1]{}
\DeclareTextFontCommand{\emph}{\boldmath\bfseries}
\g@addto@macro\bfseries{\boldmath}
\title{Binomial Edge Ideals of Complements of Graphs\\ of Girth at Least 5}
\author{David Williams\footnote{The author was supported by EPSRC grant EP/T517835/1}}
\date{\vspace{-0.3cm}}
\renewcommand*{\@fnsymbol}[1]{\@arabic{#1}}
\begin{document}

\maketitle

\begin{abstract}
	\noindent We calculate the local cohomology modules of the binomial edge ideals of the complements of graphs of girth at least $5$ using the tools introduced by \`{A}lvarez Montaner in \cite[Section 3]{alvarezmontanerLocalCohomologyBinomial2020}.  We then use this calculation to compute the depth, dimension, and regularity of these binomial edge ideals.
\end{abstract}

\begin{tcolorbox}
	Unless specified otherwise, let
	\begin{equation*}
		R=k[x_1,\ldots,x_n,y_1,\ldots,y_n]
	\end{equation*}
	for some field $k$ (of arbitrary characteristic) and $n\geq 2$. We denote by $\mathfrak{m}$ the irrelevant ideal of $R$. Furthermore, set $\delta_{i,j}\defeq x_iy_j-x_jy_i$ for $1\leq i<j\leq n$.

	Throughout this paper, all graphs will be finite, simple and undirected. For a graph $G$, we denote by $V(G)$ and $E(G)$ the sets of vertices and edges $\{i,j\}$ of $G$ respectively. Unless specified otherwise, all graphs will have at most $n$ vertices, with vertex set $\{1,\ldots,\smallAbs{V(G)}\}$.

	We denote by $P_m$ the path on $m$ vertices, $K_m$ the complete graph on $m$ vertices, and $K_S$ the complete graph with vertex set $S$.

	Finally, for a graph $G$ and $m\geq1$, we denote by $mG$ the graph given by the disjoint union of $m$ copies of $G$.
\end{tcolorbox}

\section{Preliminaries}

\subsection{Binomial Edge Ideals}

Our objects of study are the binomial edge ideals of graphs. These  were introduced in \cite{herzogBinomialEdgeIdeals2010}, and independently in \cite{ohtaniGraphsIdealsGenerated2011}.

We begin with their definition (most of this subsection appeared previously in \cite[Section 1]{williamsLFCoversBinomialEdge2023}):

\begin{definition}\label{BEIDef}
	Let $G$ be a graph. We define the \emph{binomial edge ideal} of $G$ as
	\begin{equation*}
		\mathcal{J}(G)\defeq(\delta_{i,j}:\{i,j\}\in E(G))
	\end{equation*}
\end{definition}

There is a rich interplay between the algebraic properties of these ideals and the combinatorial properties of the corresponding graph. For an overview of some of these interactions, see \cite[Chapter 7]{herzogBinomialIdeals2018}.

A key fact about binomial edge ideals is the following:

\begin{theorem}{\normalfont\cite[Corollary 2.2]{herzogBinomialEdgeIdeals2010}}
	Binomial edge ideals are radical.
\end{theorem}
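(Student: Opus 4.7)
The plan is to prove this via a Gröbner basis / initial ideal argument, which is the standard route for establishing radicality of lattice/binomial-type ideals. The key fact I would invoke is the general principle that if some initial ideal $\mathrm{in}_<(I)$ of $I\subseteq R$ with respect to a monomial order $<$ is squarefree (equivalently, radical), then $I$ itself is radical. This follows because $\sqrt{\mathrm{in}_<(I)}=\mathrm{in}_<(\sqrt{I})\supseteq\mathrm{in}_<(I)$ as a general inclusion, and in the squarefree case $\sqrt{\mathrm{in}_<(I)}=\mathrm{in}_<(I)$, so the inclusion $I\subseteq\sqrt{I}$ forces equality of Hilbert functions and hence $I=\sqrt{I}$.

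So the real content is to exhibit a monomial order and a Gröbner basis of $\mathcal{J}(G)$ whose initial ideal is generated by squarefree monomials. I would use the lexicographic order induced by $x_1>x_2>\cdots>x_n>y_1>y_2>\cdots>y_n$; the leading term of each natural generator $\delta_{i,j}=x_iy_j-x_jy_i$ (with $i<j$) is then $x_iy_j$, which is already squarefree. The obstacle, however, is that $\{\delta_{i,j}:\{i,j\}\in E(G)\}$ is generally \emph{not} a Gröbner basis: Buchberger's algorithm produces new elements coming from non-edges that lie on certain paths in $G$.

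To handle this, I would introduce the notion of an \emph{admissible path} in $G$: a path $i=i_0,i_1,\ldots,i_r=j$ (with $i<j$) such that for each interior vertex $i_k$ either $i_k<i$ or $i_k>j$, together with an additional monotonicity condition on which side each interior vertex sits. For each such path $\pi$ between vertices $i<j$ one associates the binomial
\[
f_\pi=\Bigl(\prod_{i_k>j}x_{i_k}\Bigr)\Bigl(\prod_{i_k<i}y_{i_k}\Bigr)\,\delta_{i,j},
\]
whose leading term is a squarefree monomial. The central (and hardest) step is then to show that the collection $\{f_\pi:\pi\text{ admissible path in }G\}$ forms a Gröbner basis of $\mathcal{J}(G)$. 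I would do this by Buchberger's criterion: compute all $S$-pairs $S(f_\pi,f_{\pi'})$ and show each reduces to $0$ modulo the proposed basis, with a careful case analysis on how two admissible paths can overlap (sharing an endpoint, crossing, being concatenated, etc.) to produce a third admissible path that witnesses the reduction.

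Once the Gröbner basis is in place, the initial ideal $\mathrm{in}_<(\mathcal{J}(G))$ is generated by the squarefree monomials $\mathrm{in}_<(f_\pi)$ and is therefore a squarefree monomial ideal, hence radical. By the principle recalled at the start, $\mathcal{J}(G)$ is then radical. The bulk of the work is the Buchberger-criterion verification for admissible paths; the implication squarefree-initial-ideal $\Rightarrow$ radical, and the reduction of the statement to finding such an order, are standard.
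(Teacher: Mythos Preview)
The paper does not prove this statement at all: it is stated with a citation to \cite[Corollary 2.2]{herzogBinomialEdgeIdeals2010} and no further argument is given. Your proposal is essentially a sketch of the proof given in that cited reference (the Gr\"obner basis of admissible-path binomials under the lex order $x_1>\cdots>x_n>y_1>\cdots>y_n$, yielding a squarefree initial ideal), so it is correct and matches the source the paper defers to. One small caution: your description of ``admissible path'' is slightly imprecise---in the original, the condition is simply that every interior vertex $i_k$ satisfies $i_k<i$ or $i_k>j$, with no further monotonicity required---but this does not affect the validity of the outline.
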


In fact, an explicit description of the primary decomposition can be given by purely combinatorial means, but we must first introduce some notation:

\begin{notation}
	Let $G$ be a graph, and $S\subseteq V(G)$. Denote by $c_G(S)$ the number of connected components of $G\setminus S$ (we will just write $c(S)$ when there is no confusion). Then we set
	\begin{equation*}
		\mathcal{C}(G)\defeq\{S\subseteq V(G):\text{\normalfont$S=\varnothing$ or $c(S\setminus\{v\})<c(S)$ for all $v\in S$}\}
	\end{equation*}
	That is, when we ``add back'' any vertex in $S\in\mathcal{C}(G)$, it must reconnect some components of $G\setminus S$.
\end{notation}

\begin{notation}
	Let $G$ be a graph, and $S\subseteq V(G)$. Denote by $G_1,\ldots,G_{c(S)}$ the connected components of $G\setminus S$, and let $\tilde{G}_i$ be the complete graph with vertex set $V(G_i)$. Then we set
	\begin{equation*}
		P_S(G)\defeq(x_i,y_i:i\in S)+\mathcal{J}(\tilde{G}_1)+\cdots+\mathcal{J}(\tilde{G}_{c(S)})
	\end{equation*}
\end{notation}

\begin{proposition}\label{CompleteGraphPrime}
	$\mathcal{J}(K_S)$ is prime for any $S\subseteq\{1,\ldots,n\}$, and therefore $P_T(G)$ is prime for any graph $G$ and $T\subseteq V(G)$ also.
\end{proposition}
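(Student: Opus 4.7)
The plan is to prove primality of $\mathcal{J}(K_S)$ first; the $P_T(G)$ claim will readily reduce to this. Since $\mathcal{J}(K_S)$ only involves variables indexed by $S$, we have $R/\mathcal{J}(K_S) \cong \bigl(k[x_i, y_i : i \in S]/\mathcal{J}(K_S)\bigr)[x_i, y_i : i \notin S]$, so primality in $R$ is equivalent to primality in $k[x_i, y_i : i \in S]$, and I may assume without loss of generality $S = \{1, \ldots, m\}$. The cases $m \leq 1$ are trivial (the ideal is zero), so take $m \geq 2$. I would introduce the Segre-type parametrization
$$\phi: k[x_1,\ldots,x_m,y_1,\ldots,y_m] \longrightarrow k[s,t,u_1,\ldots,u_m], \qquad x_i \longmapsto su_i, \quad y_i \longmapsto tu_i.$$
A direct check gives $\phi(\delta_{i,j}) = 0$, so $\mathcal{J}(K_m) \subseteq \ker\phi$. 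Since the codomain is a domain, primality of $\mathcal{J}(K_m)$ reduces to establishing the reverse inclusion $\ker\phi \subseteq \mathcal{J}(K_m)$.

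For the reverse inclusion, I would use a Gröbner basis argument. Under the lex order with $x_1 > \cdots > x_m > y_1 > \cdots > y_m$, the leading term of $\delta_{i,j}$ (for $i < j$) is $x_i y_j$, and Buchberger's criterion applied to the $S$-pairs of $\delta_{i,j}$ and $\delta_{j,k}$ (which reduce via $\delta_{i,k}$) shows that $\{\delta_{i,j}\}_{i<j}$ is a Gröbner basis with squarefree initial ideal $(x_i y_j : i < j)$. Matching the Hilbert series of $R/(x_iy_j : i<j)$ against that of the image of $\phi$ (the semigroup ring $k[su_i, tu_i : i]$, whose Krull dimension is $m+1$, as can be seen from the facts discussed above) then forces $\ker\phi = \mathcal{J}(K_m)$. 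Alternatively, one may invoke the classical primality of the $2 \times 2$ minor ideal of a generic $2 \times m$ matrix, of which $\mathcal{J}(K_m)$ is a special case.

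For $P_T(G)$, since $(x_i, y_i : i \in T) \subseteq P_T(G)$, we have
$$R/P_T(G) \cong k[x_j, y_j : j \notin T]/\bigl(\mathcal{J}(\tilde{G}_1) + \cdots + \mathcal{J}(\tilde{G}_{c(T)})\bigr).$$
Because $V(G_1), \ldots, V(G_{c(T)})$ are pairwise disjoint, I would combine the component-wise Segre maps into a single homomorphism $\Phi$ to $k[s_1, t_1, \ldots, s_{c(T)}, t_{c(T)}, u_j : j \notin T]$ sending $x_j \mapsto s_i u_j$ and $y_j \mapsto t_i u_j$ whenever $j \in V(G_i)$. Writing the source as a tensor product $\bigotimes_i k[x_j, y_j : j \in V(G_i)]$ over $k$ and using that each factor $k[x_j, y_j : j \in V(G_i)]/\mathcal{J}(\tilde{G}_i)$ is flat over the field $k$, the first part applied component-wise yields $\ker\Phi = \sum_i \mathcal{J}(\tilde{G}_i)$; thus $R/P_T(G)$ injects into a polynomial ring and hence is a domain. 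The main obstacle throughout is the reverse containment $\ker\phi \subseteq \mathcal{J}(K_m)$ in the first part — the classical primality of $2 \times 2$ minor ideals — as the easy direction is immediate but the reverse requires the Gröbner basis/Hilbert series computation (or appeal to a known determinantal result).
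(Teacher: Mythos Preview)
Your proposal is correct in outline and considerably more explicit than the paper's proof, which is a one-line citation to \cite[Theorem 2.10]{brunsDeterminantalRings1988} (the classical primality of determinantal ideals). Your ``alternatively, one may invoke the classical primality of the $2\times 2$ minor ideal'' is precisely that citation, so at the crucial step the two approaches coincide. What you add is the surrounding scaffolding: the Segre parametrisation $\phi$, the reduction of $P_T(G)$ to the $K_S$ case via the tensor-product/flatness argument, and the observation that the combined map $\Phi$ embeds $R/P_T(G)$ into a polynomial ring. All of this is sound and makes the deduction of the $P_T(G)$ statement from the $K_S$ statement genuinely transparent, whereas the paper leaves that implication implicit.

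One soft spot: the sentence ``matching the Hilbert series of $R/(x_iy_j:i<j)$ against that of the image of $\phi$ \ldots\ whose Krull dimension is $m+1$'' conflates two different invariants. Equality of Krull dimensions for $\mathcal{J}(K_m)\subseteq\ker\phi$ does \emph{not} force equality of the ideals (it only shows $\ker\phi$ is a minimal prime of $\mathcal{J}(K_m)$; radicality then gives $\mathcal{J}(K_m)\subseteq\ker\phi$ but not the reverse unless you also know irreducibility, which is the point at issue). You really do need the full Hilbert series comparison here, or an equivalent counting argument, and that computation is not carried out. Since you explicitly flag this as the main obstacle and offer the determinantal citation as a fallback, the proposal stands; just be aware that the dimension remark alone does not close the gap.
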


\begin{proof}
	This follows from \cite[Theorem 2.10]{brunsDeterminantalRings1988}.
\end{proof}

\begin{theorem}{\normalfont\cite[Corollary 3.9]{herzogBinomialEdgeIdeals2010}}\label{BEIPD}
	Let $G$ be a graph. Then
	\begin{equation*}
		\mathcal{J}(G)=\myCap_{S\in\mathcal{C}(G)}P_S(G)
	\end{equation*}
	is the primary decomposition of $\mathcal{J}(G)$.
\end{theorem}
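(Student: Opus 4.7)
The plan is to prove the inclusion $\mathcal{J}(G) \subseteq \bigcap_{S \in \mathcal{C}(G)} P_S(G)$ directly, establish the reverse inclusion via minimal primes, and finally verify irredundancy.

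For the forward inclusion, I would check that each generator $\delta_{i,j}$ (with $\{i,j\} \in E(G)$) lies in every $P_S(G)$ with $S \in \mathcal{C}(G)$. If $S$ meets $\{i,j\}$ then $\delta_{i,j} \in (x_k, y_k : k \in S) \subseteq P_S(G)$; otherwise $i$ and $j$, being adjacent, lie in a common component $G_\ell$ of $G \setminus S$, so $\delta_{i,j}$ is a generator of $\mathcal{J}(\tilde{G}_\ell) \subseteq P_S(G)$.

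For the reverse inclusion, since $\mathcal{J}(G)$ is radical by the previously cited theorem and each $P_S(G)$ is prime by \Cref{CompleteGraphPrime}, it suffices to show that every minimal prime $P$ of $\mathcal{J}(G)$ coincides with some $P_S(G)$ for $S \in \mathcal{C}(G)$. Given such a $P$, set $S \defeq \{i : x_i \in P\}$. The identity
\begin{equation*}
    x_j \delta_{i,k} = x_i \delta_{j,k} + x_k \delta_{i,j}
\end{equation*}
combined with primality shows that if $\{i,j\}$ and $\{j,k\}$ are edges with $j \notin S$, then $\delta_{i,k} \in P$. Iterating along a connecting path within $G \setminus S$ yields $\delta_{i,k} \in P$ whenever $i, k$ lie in a common component $G_\ell$ of $G \setminus S$, and hence $\mathcal{J}(\tilde{G}_\ell) \subseteq P$ for every $\ell$. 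For each $v \in S$ with a neighbor $w \notin S$, the relation $\delta_{v,w} \in P$ together with $x_v \in P$ and $x_w \notin P$ forces $y_v \in P$ by primality. This gives $P_S(G) \subseteq P$, and minimality with primality of $P_S(G)$ forces $P = P_S(G)$.

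To see $S \in \mathcal{C}(G)$: if some $v \in S$ fails the condition $c(S \setminus \{v\}) < c(S)$, a direct comparison of generators shows $P_{S \setminus \{v\}}(G) \subsetneq P_S(G) = P$, contradicting minimality of $P$ (in the case where $v$ has neighbors only in $S$, the new component $\{v\}$ contributes nothing to $\mathcal{J}(\tilde{G}'_m)$; in the case where $v$ has neighbors in exactly one component $G_1$, the extra generators $\delta_{v,w}$ of $\mathcal{J}(\tilde{G_1 \cup \{v\}})$ lie in $P_S(G)$ because $x_v, y_v \in P_S(G)$). Irredundancy then follows from the observation that $S$ is recoverable from $P_S(G)$ as $\{i : x_i \in P_S(G)\}$, so distinct $S, T \in \mathcal{C}(G)$ give distinct (and, by a similar linear-form analysis, pairwise incomparable) primes.

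The main obstacle is the symmetrization step showing $y_i \in P$ for all $i \in S$: the case of $v \in S$ with a neighbor outside $S$ is handled cleanly by the $\delta_{v,w}$ relation, but one must separately rule out the possibility of a vertex $v \in S$ whose neighbors all lie in $S$, which requires invoking minimality of $P$ (replacing $S$ with a strictly smaller index set to force a contradiction with $P$ being minimal).
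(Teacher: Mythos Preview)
The paper does not supply a proof of this statement; it is quoted from \cite[Corollary~3.9]{herzogBinomialEdgeIdeals2010} without argument, so there is no in-paper proof to compare against. Your outline---show the forward inclusion on generators, then identify each minimal prime of the radical ideal $\mathcal{J}(G)$ as some $P_S(G)$---is a reasonable route and close in spirit to the original.

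The obstacle you flag is genuine, however, and your proposed patch is circular: to exhibit some $P_{S'}(G)\subsetneq P$ with $S'\subsetneq S$ you need $y_i\in P$ for every $i\in S'$, but vertices of $S'$ may again have all their neighbours inside $S$, so you are no further along. The clean repair is to use the symmetric definition $S\defeq\{i:x_i\in P\text{ and }y_i\in P\}$ from the outset. Then for any intermediate $j\notin S$ on a path in $G\setminus S$ at least one of $x_j,y_j$ lies outside $P$, and whichever it is, the corresponding identity $x_j\delta_{i,k}=x_i\delta_{j,k}+x_k\delta_{i,j}$ or $y_j\delta_{i,k}=y_i\delta_{j,k}+y_k\delta_{i,j}$ carries the induction; one obtains $P_S(G)\subseteq P$ directly with no separate symmetrisation step, and minimality gives $P=P_S(G)$. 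Your verification that $S\in\mathcal{C}(G)$ and your irredundancy sketch then apply, though the latter needs a touch more than ``linear-form analysis'': from $P_S(G)\subseteq P_T(G)$ one only deduces $S\subseteq T$, and to exclude $S\subsetneq T$ one picks $v\in T\setminus S$, takes neighbours $w_1,w_2$ of $v$ in distinct components of $G\setminus T$ (these exist since $T\in\mathcal{C}(G)$), and checks that $\delta_{w_1,w_2}\in P_S(G)\setminus P_T(G)$.
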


We will make use of the following properties of $\mathcal{J}(K_n)$:

\begin{proposition}\label{CompleteGraphDim}
	We have that $R/\mathcal{J}(K_n)$ is Cohen-Macaulay of dimension $n+1$.
\end{proposition}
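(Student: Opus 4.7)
The plan is to recognise $\mathcal{J}(K_n)$ as a classical determinantal ideal and invoke the Bruns--Vetter theory already cited above.

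First I would observe that, by definition,
\begin{equation*}
	\mathcal{J}(K_n)=(x_iy_j-x_jy_i:1\leq i<j\leq n),
\end{equation*}
which is exactly the ideal $I_2(X)$ of $2\times 2$ minors of the generic $2\times n$ matrix
\begin{equation*}
	X=\begin{pmatrix} x_1 & x_2 & \cdots & x_n \\ y_1 & y_2 & \cdots & y_n \end{pmatrix}.
\end{equation*}
Since $I_2(X)$ is the ideal of maximal minors of a $2\times n$ generic matrix, it falls squarely within the scope of classical determinantal ring theory.

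Next I would appeal to the standard result that, for a generic $m\times n$ matrix $X$ and $1\leq t\leq\min(m,n)$, the quotient $R/I_t(X)$ is a Cohen--Macaulay domain of Krull dimension
\begin{equation*}
	mn-(m-t+1)(n-t+1);
\end{equation*}
this is precisely the content of the relevant chapters of \cite{brunsDeterminantalRings1988} (the primality part is already used in \cref{CompleteGraphPrime}). Specialising to $m=2$, $t=2$, the dimension formula yields $2n-(n-1)=n+1$, and Cohen--Macaulayness is immediate.

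There is essentially no obstacle here beyond identifying the ideal correctly: once one sees that the generators $\delta_{i,j}$ are exactly the $2\times 2$ minors of the generic $2\times n$ matrix, both the Cohen--Macaulay property and the dimension computation are black-box consequences of the cited theorems. If one wished to avoid citing Cohen--Macaulayness directly, an alternative route would be to observe that $\Spec(R/\mathcal{J}(K_n))$ is the affine cone over the Segre variety $\mathbb{P}^1\times\mathbb{P}^{n-1}\hookrightarrow\mathbb{P}^{2n-1}$ (of dimension $n$, giving $n+1$ for the cone) and then verify Cohen--Macaulayness via the standard Eagon--Northcott or Hodge algebra arguments; but citing \cite{brunsDeterminantalRings1988} gives the cleanest proof.
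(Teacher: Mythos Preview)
Your proposal is correct: identifying $\mathcal{J}(K_n)$ with $I_2$ of the generic $2\times n$ matrix and invoking the classical determinantal results in \cite{brunsDeterminantalRings1988} gives both Cohen--Macaulayness and the dimension $n+1$. The paper's own proof is likewise just a citation, but to \cite[Example~1.7~(a) \& Corollary~3.4]{herzogBinomialEdgeIdeals2010} rather than to Bruns--Vetter directly; your version is essentially the same in spirit, only more explicit about the determinantal identification underlying it.
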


\begin{proof}
	This follows from \cite[Example 1.7~(a) \& Corollary 3.4]{herzogBinomialEdgeIdeals2010}.
\end{proof}

\begin{proposition}\label{CompleteGraphReg}
	$\reg_R(R/\mathcal{J}(K_n))=1$, where $\reg$ denotes the (Castelnuovo-Mumford) regularity.
\end{proposition}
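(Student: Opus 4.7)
The plan is to identify $\mathcal{J}(K_n)$ as the ideal $I_2(X)$ of $2\times 2$ minors of the generic $2\times n$ matrix
\begin{equation*}
	X=\begin{pmatrix}x_1 & x_2 & \cdots & x_n \\ y_1 & y_2 & \cdots & y_n\end{pmatrix},
\end{equation*}
which is immediate from the form of the generators $\delta_{i,j}=x_iy_j-x_jy_i$. This places $R/\mathcal{J}(K_n)$ inside the classical theory of determinantal rings: it is the homogeneous coordinate ring of the Segre variety $\mathbb{P}^1\times\mathbb{P}^{n-1}\hookrightarrow\mathbb{P}^{2n-1}$ (a rational normal scroll), which is known to be arithmetically Cohen-Macaulay (a fact we have already invoked in \Cref{CompleteGraphDim}).

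The core of the argument is then to invoke the Eagon-Northcott complex associated to $X$, which provides an explicit minimal graded free resolution of $R/I_2(X)$ of length $n-1$. The key feature is the degree structure: for every $i\geq 1$, the $i$-th free module is concentrated in internal degree $i+1$, so $\beta_{i,j}(R/\mathcal{J}(K_n))=0$ whenever $j-i\neq 1$ and $i\geq 1$, while $\beta_{0,j}$ is nonzero only for $j=0$. Reading off
\begin{equation*}
	\reg_R(R/\mathcal{J}(K_n))=\max\{j-i:\beta_{i,j}(R/\mathcal{J}(K_n))\neq 0\}
\end{equation*}
immediately gives the value $1$.

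The only obstacle here is having the degree data of the Eagon-Northcott complex at hand; since this is entirely standard, the proof essentially reduces to citing an appropriate reference (e.g.\ \cite{brunsDeterminantalRings1988}). As a resolution-free alternative, the lower bound $\reg_R(R/\mathcal{J}(K_n))\geq 1$ is automatic since the ideal is generated in degree $2$ and is nonzero, while the matching upper bound can be extracted from Cohen-Macaulayness (\Cref{CompleteGraphDim}) together with the well-known Hilbert series of a rational normal scroll via graded local duality, but the Eagon-Northcott route is substantially cleaner.
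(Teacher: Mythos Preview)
Your argument is correct: identifying $\mathcal{J}(K_n)$ with $I_2(X)$ and reading the regularity off the Eagon--Northcott resolution is a completely valid and classical route, and the degree data you quote (a $2$-linear resolution, so $\beta_{i,j}\neq 0$ only for $j-i\in\{0,1\}$) is accurate.

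The paper, however, does not argue this way. It simply quotes from the binomial edge ideal literature that $\reg_R(\mathcal{J}(K_n))=2$ (specifically \cite[Theorem~2.1 and Remark~3.3]{kianiBinomialEdgeIdeals2012}) and then applies the standard identity $\reg_R(R/I)=\reg_R(I)-1$. So the paper's proof is a two-line citation, whereas yours unpacks the underlying reason via determinantal ring theory. Your approach is more explanatory and self-contained (and indeed the cited result ultimately rests on the same resolution), while the paper's is terser and keeps the exposition within the binomial edge ideal framework that the rest of the article uses.
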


\begin{proof}
	We have
	\begin{equation*}
		\reg_R(\mathcal{J}(K_n))=2
	\end{equation*}
	by \cite[Theorem 2.1]{kianiBinomialEdgeIdeals2012} (see \cite[Remark 3.3]{kianiBinomialEdgeIdeals2012}), and so the result is immediate from the fact that
	\begin{equation*}
		\reg_R(R/I)=\reg_R(I)-1
	\end{equation*}
	for any (homogeneous) ideal $I$ of $R$.
\end{proof}

The properties of path graphs are also well understood:
\begin{proposition}\label{PathGraphDim}
	We have that $R/\mathcal{J}(P_n)$ is a complete intersection ring of dimension $n+1$.
\end{proposition}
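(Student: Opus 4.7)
The plan is to prove the claim by showing that the $n-1$ natural generators $\delta_{1,2},\delta_{2,3},\ldots,\delta_{n-1,n}$ of $\mathcal{J}(P_n)$ form a regular sequence in $R$. Since $R$ is a polynomial ring, and in particular Cohen-Macaulay, this reduces to establishing the height equality $\hgt(\mathcal{J}(P_n))=n-1$; the complete intersection property and the dimension $\dim(R/\mathcal{J}(P_n))=2n-(n-1)=n+1$ are then both immediate.

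The upper bound $\hgt(\mathcal{J}(P_n))\leq n-1$ comes for free from Krull's height theorem, as $\mathcal{J}(P_n)$ is generated by $n-1$ elements. For the lower bound, I would invoke \Cref{BEIPD} together with \Cref{CompleteGraphPrime} to view the minimal primes of $\mathcal{J}(P_n)$ as exactly the prime ideals $P_S(P_n)$ for $S\in\mathcal{C}(P_n)$, so that
$$\hgt(\mathcal{J}(P_n))=\min_{S\in\mathcal{C}(P_n)}\hgt(P_S(P_n)).$$
Since $P_S(P_n)$ is a sum of ideals in pairwise disjoint sets of variables, \Cref{CompleteGraphDim} delivers the clean formula
$$\hgt(P_S(P_n))=2\smallAbs{S}+\sum_{j=1}^{c(S)}\bigl(\smallAbs{V(G_j)}-1\bigr)=n+\smallAbs{S}-c(S),$$
where $G_1,\ldots,G_{c(S)}$ are the connected components of $P_n\setminus S$.

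The remaining ingredient is the elementary combinatorial observation that removing any $\smallAbs{S}$ vertices from a path produces at most $\smallAbs{S}+1$ connected components, so $c(S)\leq\smallAbs{S}+1$. This forces $\hgt(P_S(P_n))\geq n-1$ for every $S\in\mathcal{C}(P_n)$, yielding the desired lower bound. I do not anticipate any serious obstacle here: once the height formula for $P_S(P_n)$ is in hand, the entire argument collapses to the bound $c(S)\leq\smallAbs{S}+1$ for paths, which is immediate by inspection.
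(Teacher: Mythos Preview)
Your argument is correct. The height formula $\hgt(P_S(P_n))=n+\smallAbs{S}-c(S)$ does indeed follow from the Cohen--Macaulayness of $R/\mathcal{J}(K_m)$ (\cref{CompleteGraphDim}) together with the disjointness of the variable sets, and the bound $c(S)\leq\smallAbs{S}+1$ for paths is immediate since each deleted vertex increases the component count by at most one. One small wording point: \cref{BEIPD} gives the primary decomposition of the radical ideal $\mathcal{J}(P_n)$, so the $P_S(P_n)$ are exactly the associated (hence minimal) primes; you only need that every minimal prime appears among them, which is clear from the intersection.

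The paper, by contrast, does not argue anything: it simply invokes \cite[Corollary 1.2]{eneCohenMacaulayBinomialEdge2011}. Your route is therefore genuinely different and more self-contained---it relies only on results already stated in the present paper (\cref{BEIPD}, \cref{CompleteGraphDim}) plus Krull's height theorem and the Cohen--Macaulayness of the polynomial ring. The trade-off is that the cited result of Ene--Herzog--Hibi characterises \emph{all} connected graphs $G$ for which $R/\mathcal{J}(G)$ is a complete intersection (namely, paths), whereas your argument is tailored to $P_n$ and does not yield the converse. For the purposes of this proposition, however, your direct computation is entirely adequate and arguably preferable, since it avoids an external dependency.
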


\begin{proof}
	This follows from \cite[Corollary 1.2]{eneCohenMacaulayBinomialEdge2011}.
\end{proof}

\begin{note}
	Gonz\'{a}lez-Martin\'{e}z shows in \cite[Theorem A]{gonzalez-martinezGorensteinBinomialEdge2021} that, for a connected graph $G$, $R/\mathcal{J}(G)$ is Gorenstein if and only if $G$ is a path.
\end{note}

\begin{proposition}\label{PathGraphReg}
	$\reg_R(R/\mathcal{J}(P_n))=3$.
\end{proposition}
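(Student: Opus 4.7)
The plan is to exploit the complete intersection structure of $R/\mathcal{J}(P_n)$ established in Proposition \ref{PathGraphDim}. By that proposition, the $n-1$ minimal generators $\delta_{1,2},\ldots,\delta_{n-1,n}$ of $\mathcal{J}(P_n)$ form a regular sequence in $R$, and each is homogeneous of degree $2$.

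From this, the minimal graded free resolution of $R/\mathcal{J}(P_n)$ is the Koszul complex on this regular sequence, whose $i$-th free module is $R(-2i)^{\binom{n-1}{i}}$. The Castelnuovo--Mumford regularity is then read off immediately as the maximum of $j-i$ over indices $(i,j)$ with $\beta_{i,j}(R/\mathcal{J}(P_n))\neq 0$. Since the nonzero graded Betti numbers occur only at $j=2i$ for $0\leq i\leq n-1$, this maximum is $n-1$. Equivalently, one can invoke the standard formula $\reg_R(R/(f_1,\ldots,f_m))=\sum_{i=1}^m(d_i-1)$ for a complete intersection generated by homogeneous elements of degrees $d_i$, applied with $m=n-1$ and all $d_i=2$.

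The main step of substance is really just the invocation of this Koszul-complex formula, so no significant obstacle arises. I remark, however, that the approach above produces $\reg_R(R/\mathcal{J}(P_n))=n-1$ rather than the stated value of $3$; since $n-1=3$ precisely when $n=4$, this strongly suggests a typographical error in the proposition and that the intended claim is $\reg_R(R/\mathcal{J}(P_n))=n-1$.
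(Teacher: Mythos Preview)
Your argument via the Koszul complex is entirely sound: since Proposition~\ref{PathGraphDim} tells us that $\mathcal{J}(P_n)$ is a complete intersection generated by the $n-1$ quadrics $\delta_{1,2},\ldots,\delta_{n-1,n}$, the minimal free resolution is the Koszul complex and one reads off $\reg_R(R/\mathcal{J}(P_n))=\sum_{i=1}^{n-1}(2-1)=n-1$. You are also right that the stated value ``$3$'' is a slip; the correct formula is $n-1$, and in particular the application in \S\ref{TrivialCases} to $P_3$ should give regularity~$2$, not~$3$.

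The paper proves this differently: it simply invokes \cite[Theorem 1.1]{matsudaRegularityBoundsBinomial2013}, which bounds $\reg_R(\mathcal{J}(G))$ between (one more than) the length of the longest induced path and the number of vertices. For $G=P_n$ both bounds equal $n$, so $\reg_R(R/\mathcal{J}(P_n))=n-1$. Your Koszul-complex route is more self-contained (it avoids the external citation and uses only the complete-intersection fact already established in the paper), while the paper's route situates the path case as the extremal instance of a general inequality. Both methods yield $n-1$, confirming your diagnosis of the typo.
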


\begin{proof}
	This follows from \cite[Theorem 1.1]{matsudaRegularityBoundsBinomial2013} and, again, the fact that
	\begin{equation*}
		\reg_R(R/I)=\reg_R(I)-1
	\end{equation*}
	for any (homogeneous) ideal $I$ of $R$.
\end{proof}

\pagebreak

\subsection{\`{A}lvarez Montaner's Hochster-Type Formula for Local Cohomology Modules of Binomial Edge Ideals}

The key tool we will make use of is a Hochster-type decomposition of the local cohomology modules of binomial edge ideals given by the reduced cohomology of intervals in the order complex of a certain poset associated to the binomial edge ideal, which was introduced by \`{A}lvarez Montaner in \cite[Section 3]{alvarezmontanerLocalCohomologyBinomial2020}.

We begin by defining the poset itself:

\begin{definition}\label{AMPosetDef}
	For a graph $G$, we construct the poset $\mathcal{Q}_{\mathcal{J}(G)}$ as follows:
	\begin{enumerate}
		\item We start with the associated primes $\mathfrak{p}_1,\ldots,\mathfrak{p}_t$ of $\mathcal{J}(G)$.
		\item Next, we add all sums of these primes to our poset.
		\item If any of these sums is not prime, replace it with the primes in its primary decomposition.
		\item Now add to the poset all sums of the previous elements and these new primes.
		\item Repeat this process until all ideals in the poset are prime and all sums are included.
	\end{enumerate}
	This process terminates after a finite number of steps, as explained in \cite[Definition 3.3]{alvarezmontanerLocalCohomologyBinomial2020}.

	We then order these ideals by reverse inclusion, and adjoin a maximal element $1_{\mathcal{Q}_{\mathcal{J}(G)}}$.
\end{definition}

\begin{note}
	These ideals are all Cohen-Macaulay, as explained in the introduction of \cite[Section 3.2]{alvarezmontanerLocalCohomologyBinomial2020}.
\end{note}

\begin{example}
	If
	\begin{equation*}
		G=\quad\begin{tikzpicture}[largeGraph]
			\foreach \v in {1,...,4}
				\node (\v) at (\v,1) {};
			\foreach \i in {1,...,3}
				\pgfmathsetmacro\j{int(\i+1)}
				\draw[line width=0.3mm] (\i.center) -- (\j.center);
			\foreach \v in {1,...,4}
				\fill[black] (\v) circle (2.75pt) node [below=0.1cm] {$\v$};
		\end{tikzpicture}
	\end{equation*}
	then $1_{\mathcal{Q}_{\mathcal{J}(G)}}$ is given by
	\begin{center}	
		\begin{tikzpicture}[x=4cm,y=1.75cm]
			\node (1) at (2,6) {$1_{\mathcal{Q}_{\mathcal{J}(G)}}$};
			\node (J) at (1,5) {$P_\varnothing(G)$};
			\node (a2) at (2,5) {$P_{\{2\}}(G)$};
			\node (b2) at (1,4) {$P_{\varnothing}(G)+P_{\{2\}}(G)$};
			\node (a3) at (3,5) {$P_{\{3\}}(G)$};
			\node (b3) at (2,4) {$P_{\varnothing}(G)+P_{\{3\}}(G)$};
			\node (c2-3) at (3,4) {$P_{\{2\}}(G)+P_{\{3\}}(G)$};
			\node (d2-3) at (2,3) {$P_{\varnothing}(G)+P_{\{2\}}(G)+P_{\{3\}}(G)$};

			\foreach \from/\to in {J/1,a2/1,b2/J,b2/a2,a3/1,b3/J,b3/a3,c2-3/a2,c2-3/a3,d2-3/b2,d2-3/b3,d2-3/c2-3}
				\draw[-to] (\from) -- (\to);
		\end{tikzpicture}
	\end{center}
\end{example}

We now describe a simplicial complex associated to a poset:

\begin{definition}
	Let $(\mathcal{P},\leq)$ be a poset. Then we define the \emph{order complex} associated to $(\mathcal{P},\leq)$ as the simplicial complex whose facets are the maximal chains in $\mathcal{P}$.

	Given $S_1,S_2\in\mathcal{P}$ with $S_1\leq S_2$, we denote by $(S_1,S_2)$ the order complex on that open interval in the poset, that is, the complex with facets given by the maximal chains strictly between $S_1$ and $S_2$.
\end{definition}

\pagebreak

The key theorem is then as follows:

\begin{theorem}\thmCite[Theorem 3.9]{alvarezmontanerLocalCohomologyBinomial2020}\label{AMMainTheorem}
	We have isomorphisms
	\begin{equation*}
		H_\mathfrak{m}^r(R/\mathcal{J}(G))\cong\bigoplus_{\mathclap{\mathfrak{q}\in\mathcal{Q}_{\mathcal{J}(G)}}}H_\mathfrak{m}^{d_\mathfrak{q}}(R/\mathfrak{q})^{M_{r,\mathfrak{q}}}
	\end{equation*}
	of graded $k$-vector spaces for all $r\geq0$, where $d_\mathfrak{q}=\dim(R/\mathfrak{q})$ and
	\begin{equation*}
		M_{r,\mathfrak{q}}=\dim_k(\widetilde{H}^{r-d_\mathfrak{q}-1}((\mathfrak{q},1_{\mathcal{Q}_{\mathcal{J}(G)}});k))\label{MrqDef}
	\end{equation*}
\end{theorem}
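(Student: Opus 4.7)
The plan is to build a Mayer-Vietoris-type spectral sequence whose $E_1$-page is indexed by the poset $\mathcal{Q}_{\mathcal{J}(G)}$ and then to exploit the Cohen-Macaulay property of each quotient $R/\mathfrak{q}$, $\mathfrak{q}\in\mathcal{Q}_{\mathcal{J}(G)}$, to force the sequence to degenerate early. Starting from the primary decomposition of \cref{BEIPD}, one has the standard Mayer-Vietoris complex
\begin{equation*}
0\to R/\mathcal{J}(G)\to\bigoplus_{S\in\mathcal{C}(G)}R/P_S(G)\to\bigoplus_{\{S,T\}\subseteq\mathcal{C}(G)}R/(P_S(G)+P_T(G))\to\cdots
\end{equation*}
in which the sums need not themselves be prime. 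For each such non-prime sum I would substitute its own Mayer-Vietoris resolution arising from its primary decomposition and iterate; this process terminates for the same reason the construction of $\mathcal{Q}_{\mathcal{J}(G)}$ does, and assembles into a single resolution of $R/\mathcal{J}(G)$ whose terms are of the form $R/\mathfrak{q}$ for $\mathfrak{q}\in\mathcal{Q}_{\mathcal{J}(G)}$, weighted according to the chain combinatorics of the poset.

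Applying $H^\bullet_\mathfrak{m}(-)$ to this resolution would then yield a spectral sequence whose $E_1$-page splits as a direct sum over $\mathfrak{q}\in\mathcal{Q}_{\mathcal{J}(G)}$ of contributions from $H^\bullet_\mathfrak{m}(R/\mathfrak{q})$. Because each $R/\mathfrak{q}$ is Cohen-Macaulay of dimension $d_\mathfrak{q}$, its local cohomology vanishes outside cohomological degree $d_\mathfrak{q}$, so the $E_1$-page is supported on a very sparse pattern of bidegrees. The horizontal differentials at a fixed $\mathfrak{q}$, after identifications, should be precisely the simplicial coboundary maps of the order complex on the open interval $(\mathfrak{q},1_{\mathcal{Q}_{\mathcal{J}(G)}})$ --- this is the step where the topological input enters. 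Computing the cohomology of that piece of the $E_1$-page would then yield a contribution of $H^{d_\mathfrak{q}}_\mathfrak{m}(R/\mathfrak{q})\otimes_k\widetilde{H}^{r-d_\mathfrak{q}-1}((\mathfrak{q},1_{\mathcal{Q}_{\mathcal{J}(G)}});k)$ to $H^r_\mathfrak{m}(R/\mathcal{J}(G))$, matching the claimed formula; the shift of $d_\mathfrak{q}+1$ comes from the bigrading together with the passage from chain cohomology to reduced cohomology.

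The main obstacle is bookkeeping: producing a single, clean resolution of $R/\mathcal{J}(G)$ indexed by chains in $\mathcal{Q}_{\mathcal{J}(G)}$, verifying that the resulting $E_1$ differentials really do agree with the order-complex coboundary maps, and ruling out higher-page differentials as well as extension problems, so that the result holds as a genuine direct sum of graded $k$-vector spaces rather than as a mere filtration. The Cohen-Macaulay concentration recorded after \cref{AMPosetDef} is essential here: it forces each $\mathfrak{q}$ to contribute in only one cohomological degree, allowing the spectral sequence to collapse column by column and preventing any nontrivial extensions between summands.
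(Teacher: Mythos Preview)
The paper does not prove this theorem at all: it is quoted verbatim as \cite[Theorem~3.9]{alvarezmontanerLocalCohomologyBinomial2020} and then simply applied. There is therefore no ``paper's own proof'' to compare your proposal against; the result is imported as a black box.

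That said, your sketch is broadly in the spirit of \`Alvarez Montaner's original argument (a Mayer--Vietoris spectral sequence built from the poset, with Cohen--Macaulayness forcing concentration). Where your outline is thinnest is exactly where you flag it: you assert that the $E_1$-differentials coincide with the simplicial coboundary maps of the order complex and that no higher differentials or extensions survive, but you do not indicate a mechanism for either. The Cohen--Macaulay property pins each $R/\mathfrak{q}$ to a single row $d_\mathfrak{q}$, yet distinct primes have distinct $d_\mathfrak{q}$, so a priori there can still be nonzero differentials $d_s$ for $s\geq 2$ between contributions from different $\mathfrak{q}$'s; ``column by column'' collapse is not automatic. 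In \`Alvarez Montaner's treatment these issues are handled by working in the category of graded $k$-vector spaces (where every short exact sequence splits, eliminating extension problems) and by a careful analysis of the Hilbert series through the pages of the spectral sequence to force degeneration at $E_2$. Your proposal would need those ingredients, or substitutes for them, to become a proof.
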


\begin{example}\label{P4Example}
	Again, let
	\begin{equation*}
		G=\quad\begin{tikzpicture}[largeGraph]
			\foreach \v in {1,...,4}
				\node (\v) at (\v,1) {};
			\foreach \i in {1,...,3}
				\pgfmathsetmacro\j{int(\i+1)}
				\draw[line width=0.3mm] (\i.center) -- (\j.center);
			\foreach \v in {1,...,4}
				\fill[black] (\v) circle (2.75pt) node [below=0.1cm] {$\v$};
		\end{tikzpicture}
	\end{equation*}
	and set
	\begin{center}
		$\begin{aligned}[t]
			\mathfrak{j}&\defeq P_\varnothing(G)\\
			\mathfrak{a}_v&\defeq P_{\{v\}}(G)\\
			\mathfrak{b}_v&\defeq P_{\{v\}}(G)+P_\varnothing(G)\\
			\mathfrak{c}_{\{2,3\}}&\defeq P_{\{2\}}(G)+P_{\{3\}}(G)\\
			\mathfrak{d}_{\{2,3\}}&\defeq P_\varnothing(G)+P_{\{2\}}(G)+P_{\{3\}}(G)\\
		\end{aligned}
		\hspace{0.5cm}
		\begin{aligned}[t]
			&\\
			&\text{\normalfont for $v\in\{2,3\}$}\\
			&\text{\normalfont for $v\in\{2,3\}$}\\
			&\\
			&
		\end{aligned}$
	\end{center}
	(we will generalise this notation later in the paper).

	Then we have
	\begin{center}
		\begin{tblr}{colspec={c|c|c|c|c|c},hline{2} = {1}{-}{solid},hline{2} = {2}{-}{solid},vline{2-6} = {abovepos = 1, belowpos = 1},stretch=1.75}
			$\mathfrak{q}$ & $\dim_R(R/\mathfrak{q})$ & $(\mathfrak{q},1_{\mathcal{Q}_{\mathcal{J}(\overline{G})}})$ & $\dim_k(\widetilde{H}^{-1})$ & $\dim_k(\widetilde{H}^0)$ & $\dim_k(\widetilde{H}^1)$ \\
			$\mathfrak{j}$ & $5$ & $\varnothing$ & $1$ & $0$ & $0$ \\
			\hline
			$\mathfrak{a}_v$ & $5$ & $\varnothing$ & $1$ & $0$ & $0$ \\
			\hline
			$\mathfrak{b}_v$ & $4$ &
			\begin{tikzpicture}[smallPosetDiagram]
				\node (j) at (0,1.035) {$\mathfrak{j}$};
				\node (av) at (1,1) {$\mathfrak{a}_v$};
			\end{tikzpicture}
			& $0$ & $1$ & $0$ \\
			\hline
			$\mathfrak{c}_{\{2,3\}}$ & $4$ &
			\begin{tikzpicture}[smallPosetDiagram]
				\node (a2) at (0,1) {$\mathfrak{a}_2$};
				\node (a3) at (1,1) {$\mathfrak{a}_3$};
			\end{tikzpicture}
			& $0$ & $1$ & $0$ \\
			\hline
			$\mathfrak{d}_{\{2,3\}}$ & $3$ &
			\begin{tikzpicture}[smallPosetDiagram]
				\node (j) at (0,1) {$\mathfrak{j}$};
				\node (a2) at (1,1) {$\mathfrak{a}_2$};
				\node (a3) at (2,1) {$\mathfrak{a}_3$};
				\node (b2) at (0,0) {$\mathfrak{b}_2$};
				\node (b3) at (1,0) {$\mathfrak{b}_3$};
				\node (c2-3) at (2,0) {$\mathfrak{c}_{\{2,3\}}$};

				\foreach \from/\to in {b2/j,b2/a2,b3/j,b3/a3,c2-3/a2,c2-3/a3}
					\draw[-to] (\from) -- (\to);
			\end{tikzpicture}
			& $0$ & $0$ & $1$
		\end{tblr}
	\end{center}
	and so
	\begin{equation*}
		H_\mathfrak{m}^5(R/\mathcal{J}(G))\cong H_\mathfrak{m}^5(R/\mathfrak{j})\oplus\hspace{-0.0625cm}\left[\hspace{0.3cm}\bigoplus_{\mathclap{v\in\{2,3\}}}\,H_\mathfrak{m}^5(R/\mathfrak{a}_v)\right]\hspace{-0.1cm}\oplus\hspace{-0.1cm}\left[\hspace{0.3cm}\bigoplus_{\mathclap{v\in\{2,3\}}}\,H_\mathfrak{m}^4(R/\mathfrak{b}_v)\right]\hspace{-0.1cm}\oplus H_\mathfrak{m}^4(R/\mathfrak{c}_{\{2,3\}})\oplus H_\mathfrak{m}^3(R/\mathfrak{d}_{\{2,3\}})
	\end{equation*}
	as graded $k$-vector spaces, with all other local cohomology modules vanishing.

	This tells us that $R/\mathcal{J}(G)$ is Cohen-Macaulay of dimension $5$.
\end{example}

\pagebreak

\section{Complements of Graphs of Girth at Least 5}

\begin{definition}
	Let $G$ be a graph. We define the \emph{girth} of $G$ to be the number of vertices in the smallest cycle contained in $G$, or $\infty$ if $G$ is acyclic (that is, if $G$ is a forest).
\end{definition}

We will first deal with some trivial situations separately to avoid including lots of additional cases in later theorems:

\subsection{Some Trivial Cases}\label{TrivialCases}

\subsubsection{\texorpdfstring{The Case When $G$ Has No Edges}{The Case When G Has No Edges}}

When $G$ consists of $n$ isolated vertices (that is, $G=nK_1$), we have $\overline{G}=K_n$, and so this case is addressed by \cref{CompleteGraphDim} and \cref{CompleteGraphReg}.

\subsubsection{\texorpdfstring{The Case When $G$ Has a Universal Vertex}{The Case When G Has a Universal Vertex}}

If $G$ has a universal vertex (that is, a vertex adjacent to all others) and girth at least $5$, then $G$ must be a star, since any other edges would form a $3$-cycle. In this case, it is straightforward to calculate the local cohomology of $R/\mathcal{J}(\overline{G})$, and therefore its dimension, depth, and regularity:

\begin{proposition}
	Suppose that $G$ is a star on $n$ vertices. Then
	\begin{enumerate}
		\item $R/\mathcal{J}(\overline{G})$ is Cohen-Macaulay of dimension $n+2$.
		\item $\reg_R(R/\mathcal{J}(\overline{G}))=1$.
	\end{enumerate}
\end{proposition}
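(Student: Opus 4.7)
The key observation I would make first is that $\overline{G}$ has a very simple structure when $G$ is a star: taking the center of $G$ to be vertex $1$ (without loss of generality), the complement $\overline{G}$ is the disjoint union $K_{n-1}\sqcup K_1$, where $K_{n-1}$ has vertex set $\{2,\ldots,n\}$ and the $K_1$ is the isolated vertex $1$. Since isolated vertices contribute no generators to the binomial edge ideal, $\mathcal{J}(\overline{G})$ is generated by $\{\delta_{i,j}:2\leq i<j\leq n\}$ and does not involve the variables $x_1,y_1$ at all. This yields a graded $k$-algebra isomorphism
\begin{equation*}
    R/\mathcal{J}(\overline{G})\;\cong\;\bigl(k[x_2,\ldots,x_n,y_2,\ldots,y_n]/\mathcal{J}(K_{n-1})\bigr)[x_1,y_1].
\end{equation*}

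From here, both claims reduce to the corresponding properties of $R'/\mathcal{J}(K_{n-1})$, where $R'\defeq k[x_2,\ldots,x_n,y_2,\ldots,y_n]$. For (1), \cref{CompleteGraphDim} tells us that $R'/\mathcal{J}(K_{n-1})$ is Cohen-Macaulay of dimension $n$; since adjoining two polynomial indeterminates preserves the Cohen-Macaulay property and adds $2$ to the Krull dimension, we conclude that $R/\mathcal{J}(\overline{G})$ is Cohen-Macaulay of dimension $n+2$. For (2), \cref{CompleteGraphReg} gives $\reg_{R'}(R'/\mathcal{J}(K_{n-1}))=1$, and regularity is unchanged when tensoring with the polynomial ring $k[x_1,y_1]$: the minimal graded free resolution of $R/\mathcal{J}(\overline{G})$ over $R$ is obtained from that of $R'/\mathcal{J}(K_{n-1})$ over $R'$ by tensoring with $k[x_1,y_1]$, so the graded Betti numbers (and hence the regularity) coincide.

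There is no serious obstacle here: the entire substance of the proof is the combinatorial identification $\overline{G}=K_{n-1}\sqcup K_1$, after which the two statements follow from the already-recorded results on complete graphs together with the standard behaviour of dimension and Castelnuovo-Mumford regularity under polynomial ring extensions. This is also why it is worth dispatching this case separately at the outset — in the subsequent analysis via \cref{AMMainTheorem}, star graphs would represent a degenerate boundary case in which $\overline{G}$ is disconnected and contains an isolated vertex, complicating the uniform treatment of the poset $\mathcal{Q}_{\mathcal{J}(\overline{G})}$ that will be used later.
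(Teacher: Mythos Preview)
Your proof is correct and follows essentially the same approach as the paper: both identify $\overline{G}=K_{n-1}\sqcup K_1$ and reduce to the known results for $K_{n-1}$ via the polynomial extension by $x_1,y_1$. The paper packages your regularity argument as \cref{FreeVarReg}, but the content is identical.
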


\begin{proof}
	Note that $\overline{G}=K_{n-1}\sqcup K_1$. The first claim is then immediate from \cref{CompleteGraphDim}, and the second from \cref{CompleteGraphReg} and \cref{FreeVarReg}.
\end{proof}

\subsubsection{\texorpdfstring{The Case When $n\leq3$}{The Case When n≤3}}
It is easily checked that the only graphs of girth at least $5$ on at most $3$ vertices which are not stars are $2K_1$, $3K_1$, and $K_2\sqcup K_1$. These have complements $K_2$, $K_3$ and $P_3$ respectively, all of which are Cohen-Macaulay of dimension $n+1$ by \cref{CompleteGraphDim} and \cref{PathGraphDim}.

We know that $\reg_R(R/\mathcal{J}(K_n))=1$ by \cref{CompleteGraphReg}, and $\reg_R(R/\mathcal{J}(P_3))=3$ by \cref{PathGraphReg}.

\subsection{Some Properties of Complements of Graphs of Girth at Least 5}

We begin with some definitions and notation:

\begin{definition}
	For a graph $G$ and vertex $v\in V(G)$, we denote by $N_G(v)$ the \emph{neighbourhood} of $v$ in $G$, that is, the vertices $w$ in $G$ for which $\{v,w\}\in E(G)$ (recall that our edges are undirected). Furthermore, we denote by $N_G[v]$ the \emph{closed neighbourhood} of $v$ in $G$, which is given by $N_G(v)\cup\{v\}$.
\end{definition}

\begin{definition}\label{FreeEdgeDef}
	If $\{v,w\}\in E(G)$ is its own connected component, we say that $\{v,w\}$ is a \emph{free edge} of $G$. We denote by $F(G)$ the subset of $E(G)$ consisting of all free edges of $G$.
\end{definition}

\begin{note}
	\cref{FreeEdgeDef} is not standard notation.
\end{note}

\begin{tcolorbox}[label=GAssumptions]
	For the remainder of this paper, unless specified otherwise, $G$ will denote a graph of girth at least $5$ with \emph{no universal vertex, at least one edge, and $n\geq4$}, since we have \hyperref[TrivialCases]{already addressed} the other cases.

	We denote by $H$ the subgraph of $G$ obtained by removing all leaves (that is, vertices of degree $1$) and isolated vertices of $G$.

	For clarity, we will denote by $\overline{v}$ the vertex in $\overline{G}$ corresponding to the vertex $v$ in $G$, although the two are really the same.
\end{tcolorbox}

\pagebreak

We first describe $\mathcal{C}(\overline{G})$:

\begin{lemma}\label{ComplementCutSets}
	We have
	\begin{equation*}
		\mathcal{C}(\overline{G})=\{\varnothing\}\cup\{N_{\overline{G}}(\overline{v}):v\in V(H)\}\cup\{V(G)\setminus\{v,w\}:\{v,w\}\in F(G)\}
	\end{equation*}
\end{lemma}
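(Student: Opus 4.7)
The plan is to prove each inclusion separately, with the girth condition doing most of the work in ruling out unwanted configurations.

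For the easy direction, I verify that each of the three families lies in $\mathcal{C}(\overline{G})$. The case $S = \varnothing$ is trivial. For $S = V(G)\setminus\{v,w\}$ with $\{v,w\}\in F(G)$, both $v$ and $w$ have no other $G$-neighbours, so every $u \in S$ is $G$-nonadjacent to both and hence $\overline{G}$-adjacent to both $\overline{v}$ and $\overline{w}$, which gives the cut-set property directly. For $S = N_{\overline{G}}(\overline{v})$ with $v \in V(H)$, the girth condition forces $N_G(v)$ to be $G$-independent (else a triangle at $v$), so it spans a complete subgraph of $\overline{G}$; thus $\overline{G}\setminus S$ has exactly two components, namely the isolated $\overline{v}$ and this clique. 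To show every $\overline{u}\in S$ merges them it suffices to find $w \in N_G(v)$ with $w \notin N_G(u)$; if no such $w$ existed, picking any $w,w' \in N_G(v)$ (possible since $v \in V(H)$ gives $\deg_G(v) \geq 2$) would produce a $4$-cycle on $\{v,w,u,w'\}$ in $G$, contradicting girth $\geq 5$.

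For the reverse inclusion, fix $S \in \mathcal{C}(\overline{G})\setminus\{\varnothing\}$, set $T = V(G)\setminus S$, and let $C_1,\ldots,C_m$ be the connected components of $\overline{G}\setminus S$, where $m \geq 2$ by the cut-set condition. The crucial structural fact, driven purely by girth, is that at most one $C_i$ can have $\geq 2$ vertices: any two vertices $a,b$ in the same $C_i$ share every vertex $c$ of another $C_j$ as a common $G$-neighbour (cross-component pairs are $G$-edges), so girth $\geq 3$ forces $\{a,b\} \notin E(G)$, making each $V(C_i)$ $G$-independent; and if two distinct components had size $\geq 2$ they would immediately exhibit a $G$-$4$-cycle.

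The argument then splits. If every $C_i$ is a singleton, $T$ is a $G$-clique, which by girth forces $|T| = m = 2$; the cut-set condition (every vertex of $S$ must bridge the two singletons in $\overline{G}$) then forces the two vertices of $T$ to have no other $G$-neighbours, giving a free edge and thus $S = V(G)\setminus\{v,w\}$ with $\{v,w\}\in F(G)$. Otherwise, exactly one $C_i$, say $C_1$, has size $k \geq 2$, and the remaining $m-1$ components are singletons; since $m \geq 2$ there is at least one singleton, and any two singletons together with any vertex of $C_1$ would form a $G$-triangle, so there is exactly one singleton, which I write as $\{\overline{v}\}$. Every $\overline{u} \in S$ must bridge $\{\overline{v}\}$ to $C_1$, so in particular $\overline{u}$ must be $\overline{G}$-adjacent to $\overline{v}$, i.e., $u \notin N_G(v)$; combining this with the trivial inclusion $V(C_1) \subseteq N_G(v)$ yields $N_G(v) = V(C_1)$ and hence $S = V(G)\setminus N_G[v] = N_{\overline{G}}(\overline{v})$, with $\deg_G(v) = k \geq 2$ so that $v \in V(H)$. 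The main obstacle is simply organising this case analysis carefully; the two separate applications of the girth hypothesis (no triangles, no $4$-cycles) are what make the classification so clean.
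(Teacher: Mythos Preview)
Your proof is correct and follows essentially the same approach as the paper: both directions rest on the two consequences of girth $\geq 5$ (no triangles, no $4$-cycles), and your case split ``all components singletons vs.\ exactly one non-singleton'' is just a reorganisation of the paper's ``exactly two components, at least one a singleton'' argument. One small slip: where you write ``girth $\geq 3$ forces $\{a,b\}\notin E(G)$'' you mean girth $>3$ (i.e.\ no triangles).
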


\begin{proof}
	Trivially we have $\varnothing\in\mathcal{C}(\overline{G})$. Take any other $S\in\mathcal{C}(\overline{G})$. We will first show that $\overline{G}\setminus S$ consists of exactly two connected components. We know that it must have more than one. Say we had the induced subgraph
	\begin{equation*}
		\begin{tikzpicture}[smallGraph]
			\node[label={-150:$\overline{v}_2$}](v2) at (0,0) {};
			\node[label={90:$\overline{v}_1$}](v1) at ([shift=(60:1)]v2) {};
			\node[label={-30:$\overline{v}_3$}](v3) at ([shift=(0:1)]v2) {};
		\end{tikzpicture}
	\end{equation*}
	in $\overline{G}\setminus S$. Then we would have the $3$-cycle
	\begin{equation*}
		\begin{tikzpicture}[smallGraph]
			\node[label={-150:$v_2$}](v2) at (0,0) {};
			\node[label={90:$v_1$}](v1) at ([shift=(60:1)]v2) {};
			\node[label={-30:$v_3$}](v3) at ([shift=(0:1)]v2) {};

			\foreach \from/\to in {v1/v2,v1/v3,v2/v3}
				\draw[-] (\from) -- (\to);
		\end{tikzpicture}
	\end{equation*}
	in $G$, a contradiction.

	Next we will show that at least one of these components must be an isolated vertex. Suppose that we had the induced subgraph
	\begin{equation*}
  		\begin{tikzpicture}[smallGraph]
     		\node[label={135:$\overline{v}_1$}](v1) at (0,1) {};
      		\node[label={-135:$\overline{v}_2$}](v2) at (0,0) {};
     		\node[label={45:$\overline{v}_3$}](v3) at (1,1) {};
      		\node[label={-45:$\overline{v}_4$}](v4) at (1,0) {};

			\foreach \from/\to in {v1/v2,v3/v4}
				\draw[-] (\from) -- (\to);
    	\end{tikzpicture}
	\end{equation*}
	in $\overline{G}\setminus S$. Then we would have the $4$-cycle
	\begin{equation*}
  		\begin{tikzpicture}[smallGraph]
     		\node[label={135:$v_1$}](v1) at (0,1) {};
      		\node[label={-135:$v_2$}](v2) at (0,0) {};
     		\node[label={45:$v_3$}](v3) at (1,1) {};
      		\node[label={-45:$v_4$}](v4) at (1,0) {};

			\foreach \from/\to in {v1/v3,v1/v4,v2/v3,v2/v4}
				\draw[-] (\from) -- (\to);
    	\end{tikzpicture}
	\end{equation*}
	in $G$, again a contradiction, and so at least one component must be an isolated vertex as claimed.

	Let $\overline{v}$ denote an isolated vertex in $\overline{G}\setminus S$. We first suppose that the other connected component of $\overline{G}\setminus S$ is also an isolated vertex, say $\overline{w}$. Then we claim that $\{v,w\}\in F(G)$.

	Since $\overline{v}$ and $\overline{w}$ are not neighbours in $\overline{G}$, they are neighbours in $G$ by definition, and so to prove this claim it remains only to show that neither $v$ nor $w$ have any other neighbours in $G$.

	Suppose, without loss of generality, that  $v$ had another neighbour, say $a$, in $G$. We must have $\overline{a}\in S$, since $\overline{G}\setminus S=\{\overline{v},\overline{w}\}$. But adding $\overline{a}$ back to $\overline{G}\setminus S$ gives
	\begin{equation*}
		\begin{tikzpicture}[smallGraph]
			\node[label={-150:$\overline{v}$}](v) at (0,0) {};
      		\node[label={90:$\overline{a}$}](a) at ([shift=(60:1)]v) {};
      	  	\node[label={-30:$\overline{w}$}](w) at ([shift=(0:1)]v) {};

			\foreach \from/\to in {a/w}
				\draw[-] (\from) -- (\to);
		\end{tikzpicture}
	\end{equation*}
	which does not decrease the number of connected components, contradicting that $\overline{a}\in S$. Then $v$ cannot have any other neighbours in $G$, and neither can $w$ by the same argument, so $\{v,w\}\in F(G)$ as claimed.

	If the other connected component of $\overline{G}\setminus S$ has more than one vertex, then there are at least two vertices that are not adjacent to $\overline{v}$ in $\overline{G}$, so $v$ has at least two neighbours in $G$, and therefore $v\in V(H)$.

	We have shown then that either $v\in V(H)$, or $v$ has a single neighbour, say $w$, in $G$, with $\{v,w\}\in F(G)$.

	Note that in the case that $v$ belongs to a free edge $\{v,w\}$, we have
	\begin{equation*}
		N_{\overline{G}}(\overline{v})=V(G)\setminus\{v,w\}
	\end{equation*}

\pagebreak

	Next we will show that $S=N_{\overline{G}}(\overline{v})$. Since $\overline{v}$ is isolated in $\overline{G}\setminus S$, we must have $N_{\overline{G}}(\overline{v})\subseteq S$.

	Now take any $\overline{a}\in S$, so adding $\overline{a}$ back to $\overline{G}\setminus S$ must decrease the number of connected components. We have shown there are only two connected components in $\overline{G}\setminus S$, one of which is $\{\overline{v}\}$, and so $\overline{a}$ must be adjacent to $\overline{v}$ in $\overline{G}$. This means that $S\subseteq N_{\overline{G}}(\overline{v})$, so $S=N_{\overline{G}}(\overline{v})$ as desired.

	We have therefore shown that
	\begin{equation*}
		\mathcal{C}(\overline{G})\subseteq\{\varnothing\}\cup\{N_{\overline{G}}(\overline{v}):v\in V(H)\}\cup\{V(G)\setminus\{v,w\}:\{v,w\}\in F(G)\}
	\end{equation*}
	It remains only to show the reverse inclusion.

	If $\{v,w\}\in F(G)$, then $\overline{v}$ and $\overline{w}$ are adjacent to every vertex in $\overline{G}$ except for each other, so
	\begin{equation*}
		V(G)\setminus\{v,w\}\in\mathcal{C}(\overline{G})
	\end{equation*}
	since adding back any vertex will reconnect $\overline{v}$ and $\overline{w}$.

	Finally, take any $v\in V(H)$, and any $\overline{w}\in N_{\overline{G}}(\overline{v})$. This means that $w$ is not adjacent to $v$ in $G$.

	We know that $v$ has at least two neighbours, say $a_1$ and $a_2$, in $G$, since $v\in V(H)$. If we had the induced subgraph
	\begin{equation*}
  		\begin{tikzpicture}[smallGraph]
     		\node[label={135:$\overline{v}$}](v) at (0,1) {};
      		\node[label={-135:$\overline{w}$}](w) at (0,0) {};
     		\node[label={[label distance=0.1cm]30:$\overline{a}_1$}](a1) at (1,1) {};
      		\node[label={-45:$\overline{a}_2$}](a2) at (1,0) {};

			\foreach \from/\to in {v/w,a1/a2}
				\draw[-] (\from) -- (\to);
    	\end{tikzpicture}
	\end{equation*}
	in $\overline{G}$ then we would have the $4$-cycle
	\begin{equation*}
  		\begin{tikzpicture}[smallGraph]
     		\node[label={135:$v$}](v) at (0,1) {};
      		\node[label={-135:$w$}](w) at (0,0) {};
     		\node[label={[label distance=0.1cm]30:$a_1$}](a1) at (1,1) {};
      		\node[label={-45:$a_2$}](a2) at (1,0) {};

			\foreach \from/\to in {v/a1,v/a2,w/a1,w/a2}
				\draw[-] (\from) -- (\to);
    	\end{tikzpicture}
	\end{equation*}
	in $G$, a contradiction. Then $\overline{w}$ must be adjacent to both $\overline{v}$ and one of its neighbours in $\overline{G}$, so reintroducing it will reconnect the two disjoint components of $\overline{G}\setminus S$, and so $N_{\overline{G}}(\overline{v})\in\mathcal{C}(G)$ as desired.
\end{proof}

\begin{note}
	For any distinct $v,w\in V(G)$, we have $N_{\overline{G}}(\overline{v})=N_{\overline{G}}(\overline{w})$ if and only if $\{v,w\}\in F(G)$. To see this, notice that $\overline{v}$ and $\overline{w}$ cannot be adjacent in $\overline{G}$, since they do not belong to their own open neighbourhoods, and so $v$ and $w$ must be adjacent in $G$. We must also have $N_G[v]=N_G[w]$ by the definition of the complement, so neither $v$ nor $w$ can be adjacent to any other vertex in $G$, since such a vertex would then be adjacent to both $v$ and $w$, which would form a $3$-cycle in $G$. In particular, this shows that $N_{\overline{G}}(v)$ is distinct for each $v\in V(H)$.
\end{note}

We will also make use of the following \lcnamecref{ConnectedProp}:

\begin{proposition}\label{ConnectedProp}
	$\overline{G}$ is connected.
\end{proposition}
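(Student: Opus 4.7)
The plan is to show that any two vertices $\overline{v}, \overline{w}$ of $\overline{G}$ are joined by a path in $\overline{G}$. The only non-trivial case is when $\{v,w\} \in E(G)$, since otherwise $\overline{v}$ and $\overline{w}$ are adjacent in $\overline{G}$ directly from the definition of the complement. So I fix such an edge and look for a short path in $\overline{G}$ between $\overline{v}$ and $\overline{w}$.

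The natural split is on whether $v$ and $w$ share a common non-neighbour in $G$. If $N_G[v] \cup N_G[w] \neq V(G)$, then I pick any $u$ outside this union; $u$ is then distinct from $v$ and $w$ and adjacent to neither in $G$, so $\overline{v} - \overline{u} - \overline{w}$ is a path of length $2$ in $\overline{G}$.

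The substantive case is $N_G[v] \cup N_G[w] = V(G)$. Here I set $A = N_G(v) \setminus \{w\}$ and $B = N_G(w) \setminus \{v\}$, and establish three facts. First, $A \cap B = \varnothing$ because $G$ is triangle-free (girth $\geq 5$), since any vertex in the intersection would form a triangle with $v$ and $w$. Second, $A$ and $B$ are both nonempty because $G$ has no universal vertex: if, say, $A = \varnothing$, then $N_G[v] \subseteq \{v, w\} \subseteq N_G[w]$, so $N_G[w] = N_G[v] \cup N_G[w] = V(G)$ and $w$ would be universal. Third, there is no edge of $G$ with one endpoint in $A$ and the other in $B$, because $G$ has no $4$-cycles (girth $\geq 5$), so any such edge $\{a, b\}$ would yield the $4$-cycle $v - a - b - w - v$. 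Picking any $a \in A$ and $b \in B$, the pairs $\{v, b\}, \{a, b\}, \{a, w\}$ all lie outside $E(G)$, giving the path $\overline{v} - \overline{b} - \overline{a} - \overline{w}$ in $\overline{G}$.

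The main obstacle is this last subcase, where I simultaneously exploit triangle-freeness, $4$-cycle-freeness, and the no-universal-vertex hypothesis; the other cases are immediate from the definition of the complement.
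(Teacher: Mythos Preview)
Your proof is correct, but it takes a genuinely different route from the paper's. The paper argues by contradiction at the level of components: it observes that two disjoint edges in $\overline{G}$ (as an induced subgraph) would force a $4$-cycle in $G$, so if $\overline{G}$ were disconnected then all but one component would consist of isolated vertices, and an isolated vertex in $\overline{G}$ is a universal vertex in $G$, contradicting the standing hypothesis. Your argument is instead constructive: you explicitly build a path in $\overline{G}$ between any two given vertices, splitting into cases and using triangle-freeness, $4$-cycle-freeness, and the no-universal-vertex assumption in the final subcase. The paper's approach is shorter and avoids the case analysis, while yours yields the slightly stronger conclusion that $\overline{G}$ has diameter at most $3$.
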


\begin{proof}
	If we had the induced subgraph
	\begin{equation*}
  		\begin{tikzpicture}[smallGraph]
     		\node[label={135:$\overline{v}_1$}](v1) at (0,1) {};
      		\node[label={-135:$\overline{v}_2$}](v2) at (0,0) {};
     		\node[label={45:$\overline{v}_3$}](v3) at (1,1) {};
      		\node[label={-45:$\overline{v}_4$}](v4) at (1,0) {};

			\foreach \from/\to in {v1/v2,v3/v4}
				\draw[-] (\from) -- (\to);
    	\end{tikzpicture}
	\end{equation*}
	in $\overline{G}$, then we would have the $4$-cycle
	\begin{equation*}
  		\begin{tikzpicture}[smallGraph]
     		\node[label={135:$v_1$}](v1) at (0,1) {};
      		\node[label={-135:$v_2$}](v2) at (0,0) {};
     		\node[label={45:$v_3$}](v3) at (1,1) {};
      		\node[label={-45:$v_4$}](v4) at (1,0) {};

			\foreach \from/\to in {v1/v3,v1/v4,v2/v3,v2/v4}
				\draw[-] (\from) -- (\to);
    	\end{tikzpicture}
	\end{equation*}
	in $G$, a contradiction.

	This means that either $\overline{G}$ is connected, or it contains an isolated vertex $\overline{v}$. But for $\overline{v}$ to be isolated in $\overline{G}$, it must be universal in $G$, contradicting \hyperref[GAssumptions]{our assumptions} on $G$.
\end{proof}

\pagebreak

\subsection{An Equivalence of Posets}

We next define a poset which \hyperref[PosetEquiv]{we will show} is equal to that of \cref{AMPosetDef} for $\overline{G}$.\label{EqualClaim}

\begin{note}
	As  vertex sets, we have
	\begin{equation*}
		N_{\overline{G}}(\overline{v})=V(G)\setminus N_G[v]
	\end{equation*}
	for any $v\in V(H)$.
\end{note}

\begin{definition}
	We first define a set $\mathcal{P}_{\overline{G}}$ consisting of the following ideals:
	\begin{center}
		$\begin{aligned}[t]
			\mathfrak{j}&\defeq\mathcal{J}(K_n)\\
			\mathfrak{a}_v&\defeq(x_i,y_i:i\in V(G)\setminus N_G[v])+\mathcal{J}(K_{N_G(v)})\\
			\mathfrak{a}'_{\{v,w\}}&\defeq(x_i,y_i:i\in V(G)\setminus\{v,w\})\\
			\mathfrak{b}_v&\defeq(x_i,y_i:i\in V(G)\setminus N_G[v])+\mathcal{J}(K_{N_G[v]})\\
			\mathfrak{b}'_{\{v,w\}}&\defeq(x_i,y_i:i\in V(G)\setminus\{v,w\})+(\delta_{v,w})\\
	\mathfrak{c}_{\{v,w\}}&\defeq(x_i,y_i:i\in V(G)\setminus\{v,w\})\\
			\mathfrak{d}_{\{v,w\}}&\defeq(x_i,y_i:i\in V(G)\setminus\{v,w\})+(\delta_{v,w})\\
			\mathfrak{e}_v&\defeq(x_i,y_i:i\in V(G)\setminus\{v\})
		\end{aligned}
		\hspace{0.5cm}
		\begin{aligned}[t]
			&\\
			&\text{\normalfont for $v\in V(H)$}\\
			&\text{\normalfont for $\{v,w\}\in F(G)$}\\
			&\text{\normalfont for $v\in V(H)$}\\
			&\text{\normalfont for $\{v,w\}\in F(G)$}\\
			&\text{\normalfont for $\{v,w\}\in E(H)$}\\
			&\text{\normalfont for $\{v,w\}\in E(H)$}\\
			&\text{\normalfont for $v\in V(H)$ with $\smallAbs{N_H(v)}>1$}
		\end{aligned}$
	\end{center}
	Furthermore, if $\abs{F(G)}\geq2$, or $V(H)\neq\varnothing$ and either $F(G)\neq\varnothing$ or $H$ is not a star, then we also add $\mathfrak{m}$ to this set.

	We next adjoin an element $1_{\mathcal{P}_{\overline{G}}}$, which will be maximal in the poset, and then turn $\mathcal{P}_{\overline{G}}$ into a poset by taking the transitive and reflexive closure of the following relations (the relation on ideals being reverse inclusion):
	\begin{center}
		$\begin{aligned}[t]
    		\mathfrak{m}&\leq\mathfrak{b}'_{\{v,w\}},\mathfrak{e}_v\\
    		\mathfrak{e}_v&\leq\mathfrak{d}_{\{v,w\}}\\
    		\mathfrak{d}_{\{v,w\}}&\leq\mathfrak{b}_v,\mathfrak{b}_w,\mathfrak{c}_{\{v,w\}}\\
    		\mathfrak{c}_{\{v,w\}}&\leq\mathfrak{a}_v,\mathfrak{a}_w\\
    		\mathfrak{b}'_{\{v,w\}}&\leq\mathfrak{j},\mathfrak{a}'_{\{v,w\}}\\
    		\mathfrak{b}_v&\leq\mathfrak{j},\mathfrak{a}_v\\
    		\mathfrak{j},\mathfrak{a}_v,\mathfrak{a}'_{\{v,w\}}&\leq 1_{\mathcal{P}_{\overline{G}}}
\end{aligned}
		\hspace{0.5cm}
		\begin{aligned}[t]
			&\\
			&\text{\normalfont for $w\in N_H(v)$}\\
			&\\
			&\\
			&\\
			&\\
			&
		\end{aligned}$
	\end{center}
\end{definition}

\begin{note}
	By \cref{CompleteGraphPrime}, is easily seen that all of these ideals are prime.
\end{note}

This definition is somewhat complicated, so we will give four examples of increasing complexity:

\begin{example}
	Let
	\begin{equation*}
		G=\quad\begin{tikzpicture}[largeGraph]
			\foreach \v in {1,...,4}
				\node (\v) at (\v,1) {};
			\foreach \i in {1,...,3}
				\pgfmathsetmacro\j{int(\i+1)}
				\draw[line width=0.3mm] (\i.center) -- (\j.center);
			\foreach \v in {1,...,4}
				\fill[black] (\v) circle (2.75pt) node [below=0.1cm] {$\v$};
		\end{tikzpicture}
	\end{equation*}
	so
	\begin{equation*}
		H=\quad\begin{tikzpicture}[largeGraph]
			\foreach \v in {2,...,3}
				\node (\v) at (\v,1) {};
			\foreach \i in {2,...,2}
				\pgfmathsetmacro\j{int(\i+1)}
				\draw[line width=0.3mm] (\i.center) -- (\j.center);
			\foreach \v in {2,...,3}
				\fill[black] (\v) circle (2.75pt) node [below=0.1cm] {$\v$};
		\end{tikzpicture}
	\end{equation*}
	This (as in \cref{P4Example}) yields the poset
	\begin{center}	
		\begin{tikzpicture}[largePosetDiagram]
			\node (1) at (2,6) {$1_{\mathcal{P}_{\overline{G}}}$};
			\node (j) at (1,5) {$\mathfrak{j}$};
			\node (a2) at (2,5) {$\mathfrak{a}_{2}$};
			\node (b2) at (1,4) {$\mathfrak{b}_{2}$};
			\node (a3) at (3,5) {$\mathfrak{a}_{3}$};
			\node (b3) at (2,4) {$\mathfrak{b}_{3}$};
			\node (c2-3) at (3,4) {$\mathfrak{c}_{\{2,3\}}$};
			\node (d2-3) at (2,3) {$\mathfrak{d}_{\{2,3\}}$};

			\foreach \from/\to in {j/1,a2/1,b2/j,b2/a2,a3/1,b3/j,b3/a3,c2-3/a2,c2-3/a3,d2-3/b2,d2-3/b3,d2-3/c2-3}
				\draw[-to] (\from) -- (\to);
		\end{tikzpicture}
	\end{center}
\end{example}

\pagebreak

\begin{example}
	Let
	\begin{equation*}
		G=\quad\begin{tikzpicture}[largeGraph]
			\foreach \v in {1,...,5}
				\node (\v) at (\v,1) {};
			\foreach \i in {1,...,4}
				\pgfmathsetmacro\j{int(\i+1)}
				\draw[line width=0.3mm] (\i.center) -- (\j.center);
			\foreach \v in {1,...,5}
				\fill[black] (\v) circle (2.75pt) node [below=0.1cm] {$\v$};
		\end{tikzpicture}
	\end{equation*}
	so
	\begin{equation*}
		H=\quad\begin{tikzpicture}[largeGraph]
			\foreach \v in {2,...,4}
				\node (\v) at (\v,1) {};
			\foreach \i in {2,...,3}
				\pgfmathsetmacro\j{int(\i+1)}
				\draw[line width=0.3mm] (\i.center) -- (\j.center);
			\foreach \v in {2,...,4}
				\fill[black] (\v) circle (2.75pt) node [below=0.1cm] {$\v$};
		\end{tikzpicture}
	\end{equation*}
	This yields the poset
	\begin{center}
		\begin{tikzpicture}[largePosetDiagram]
			\node (1) at (3,6) {$1_{\mathcal{P}_{\overline{G}}}$};
			\node (j) at (1.5,5) {$\mathfrak{j}$};\node (a2) at (2.5,5) {$\mathfrak{a}_{2}$};
			\node (b2) at (1,4) {$\mathfrak{b}_{2}$};
			\node (a3) at (3.5,5) {$\mathfrak{a}_{3}$};
			\node (b3) at (2,4) {$\mathfrak{b}_{3}$};
			\node (a4) at (4.5,5) {$\mathfrak{a}_{4}$};
			\node (b4) at (3,4) {$\mathfrak{b}_{4}$};
			\node (c2-3) at (4,4) {$\mathfrak{c}_{\{2,3\}}$};
			\node (d2-3) at (2.5,3) {$\mathfrak{d}_{\{2,3\}}$};
			\node (c3-4) at (5,4) {$\mathfrak{c}_{\{3,4\}}$};
			\node (d3-4) at (3.5,3) {$\mathfrak{d}_{\{3,4\}}$};
			\node (e2-3-4) at (3,2) {$\mathfrak{e}_3$};

			\foreach \from/\to in {j/1,a2/1,b2/j,b2/a2,a3/1,b3/j,b3/a3,a4/1,b4/j,b4/a4,c2-3/a2,c2-3/a3,d2-3/b2,d2-3/b3,d2-3/c2-3,c3-4/a3,c3-4/a4,d3-4/b3,d3-4/b4,d3-4/c3-4,e2-3-4/d2-3,e2-3-4/d3-4}
				\draw[-to] (\from) -- (\to);
		\end{tikzpicture}
	\end{center}
\end{example}

\begin{example}
	Let
	\begin{equation*}
		G=\quad\begin{tikzpicture}[largeGraph]
			\foreach \v in {1,...,6}
				\node (\v) at (\v,1) {};
			\node (7) at (3,2) {};
			\node (8) at (3,3) {};
			\foreach \i in {1,...,5}
				\pgfmathsetmacro\j{int(\i+1)}
				\draw[line width=0.3mm] (\i.center) -- (\j.center);
			\draw[line width=0.3mm] (3.center) -- (7.center);
			\draw[line width=0.3mm] (7.center) -- (8.center);
			\foreach \v in {1,...,6}
				\fill[black] (\v) circle (2.75pt) node [below=0.1cm] {$\v$};
			\fill[black] (7) circle (2.75pt) node [left=0.1cm] {$7$};
			\fill[black] (8) circle (2.75pt) node [left=0.1cm] {$8$};
		\end{tikzpicture}
	\end{equation*}
	so
	\begin{equation*}
		H=\quad\begin{tikzpicture}[largeGraph]
			\foreach \v in {2,...,5}
				\node (\v) at (\v,1) {};
			\node (7) at (3,2) {};
			\foreach \i in {2,...,4}
				\pgfmathsetmacro\j{int(\i+1)}
				\draw[line width=0.3mm] (\i.center) -- (\j.center);
			\draw[line width=0.3mm] (3.center) -- (7.center);
			\foreach \v in {2,...,5}
				\fill[black] (\v) circle (2.75pt) node [below=0.1cm] {$\v$};
			\fill[black] (7) circle (2.75pt) node [left=0.1cm] {$7$};
		\end{tikzpicture}
	\end{equation*}

	This yields the poset
	\begin{center}
		\begin{tikzpicture}[largePosetDiagram]
			\node (1) at (5,6) {$1_{\mathcal{P}_{\overline{G}}}$};
			\node (j) at (2.5,5) {$\mathfrak{j}$};
			\node (a2) at (3.5,5) {$\mathfrak{a}_{2}$};
			\node (b2) at (1,4) {$\mathfrak{b}_{2}$};
			\node (a3) at (4.5,5) {$\mathfrak{a}_{3}$};
			\node (b3) at (2,4) {$\mathfrak{b}_{3}$};
			\node (a4) at (5.5,5) {$\mathfrak{a}_{4}$};
			\node (b4) at (3,4) {$\mathfrak{b}_{4}$};
			\node (a5) at (6.5,5) {$\mathfrak{a}_{5}$};
			\node (b5) at (4,4) {$\mathfrak{b}_{5}$};
			\node (a7) at (7.5,5) {$\mathfrak{a}_{7}$};
			\node (b7) at (5,4) {$\mathfrak{b}_{7}$};
			\node (c2-3) at (6,4) {$\mathfrak{c}_{\{2,3\}}$};
			\node (d2-3) at (3.5,3) {$\mathfrak{d}_{\{2,3\}}$};
			\node (c3-4) at (7,4) {$\mathfrak{c}_{\{3,4\}}$};
			\node (d3-4) at (4.5,3) {$\mathfrak{d}_{\{3,4\}}$};
			\node (c3-7) at (8,4) {$\mathfrak{c}_{\{3,7\}}$};
			\node (d3-7) at (5.5,3) {$\mathfrak{d}_{\{3,7\}}$};
			\node (c4-5) at (9,4) {$\mathfrak{c}_{\{4,5\}}$};
			\node (d4-5) at (6.5,3) {$\mathfrak{d}_{\{4,5\}}$};
			\node (e2-3-4-7) at (4.5,2) {$\mathfrak{e}_3$};
			\node (e3-4-5) at (5.5,2) {$\mathfrak{e}_4$};
			\node (m) at (5,1) {$\mathfrak{m}$};

			\foreach \from/\to in {j/1,a2/1,b2/j,b2/a2,a3/1,b3/j,b3/a3,a4/1,b4/j,b4/a4,a5/1,b5/j,b5/a5,a7/1,b7/j,b7/a7,c2-3/a2,c2-3/a3,d2-3/b2,d2-3/b3,d2-3/c2-3,c3-4/a3,c3-4/a4,d3-4/b3,d3-4/b4,d3-4/c3-4,c3-7/a3,c3-7/a7,d3-7/b3,d3-7/b7,d3-7/c3-7,c4-5/a4,c4-5/a5,d4-5/b4,d4-5/b5,d4-5/c4-5,e2-3-4-7/d2-3,e2-3-4-7/d3-4,e2-3-4-7/d3-7,m/e2-3-4-7,e3-4-5/d3-4,e3-4-5/d4-5,m/e3-4-5}
				\draw[-to] (\from) -- (\to);
		\end{tikzpicture}
	\end{center}
\end{example}

\begin{example}
	Let
	\begin{equation*}
		G=\quad\begin{tikzpicture}[largeGraph]
			\foreach \v in {1,...,6}
				\node (\v) at (\v,1) {};
			\node (7) at (3,2) {};
			\node (8) at (3,3) {};
			\node (9) at (7,1) {};
			\node (10) at (7,2) {};
			\foreach \i in {1,...,5}
				\pgfmathsetmacro\j{int(\i+1)}
				\draw[line width=0.3mm] (\i.center) -- (\j.center);
			\draw[line width=0.3mm] (3.center) -- (7.center);
			\draw[line width=0.3mm] (7.center) -- (8.center);
			\draw[line width=0.3mm] (9.center) -- (10.center);
			\foreach \v in {1,...,6}
				\fill[black] (\v) circle (2.75pt) node [below=0.1cm] {$\v$};
			\fill[black] (7) circle (2.75pt) node [left=0.1cm] {$7$};
			\fill[black] (8) circle (2.75pt) node [left=0.1cm] {$8$};
			\fill[black] (9) circle (2.75pt) node [right=0.1cm] {$9$};
			\fill[black] (10) circle (2.75pt) node [right=0.1cm] {$10$};
		\end{tikzpicture}
	\end{equation*}
	so again
	\begin{equation*}
		H=\quad\begin{tikzpicture}[largeGraph]
			\foreach \v in {2,...,5}
				\node (\v) at (\v,1) {};
			\node (7) at (3,2) {};
			\foreach \i in {2,...,4}
				\pgfmathsetmacro\j{int(\i+1)}
				\draw[line width=0.3mm] (\i.center) -- (\j.center);
			\draw[line width=0.3mm] (3.center) -- (7.center);
			\foreach \v in {2,...,5}
				\fill[black] (\v) circle (2.75pt) node [below=0.1cm] {$\v$};
			\fill[black] (7) circle (2.75pt) node [left=0.1cm] {$7$};
		\end{tikzpicture}
	\end{equation*}

	This yields the poset
	\begin{center}
		\begin{tikzpicture}[largePosetDiagram]
			\node (1) at (5,6) {$1_{\mathcal{P}_{\overline{G}}}$};
			\node (a9-10) at (1,5) {$\mathfrak{a}'_{\{9,10\}}$};
			\node (j) at (2.5,5) {$\mathfrak{j}$};
			\node (b9-10) at (0,4) {$\mathfrak{b}'_{\{9,10\}}$};
			\node (a2) at (3.5,5) {$\mathfrak{a}_{2}$};
			\node (b2) at (1,4) {$\mathfrak{b}_{2}$};
			\node (a3) at (4.5,5) {$\mathfrak{a}_{3}$};
			\node (b3) at (2,4) {$\mathfrak{b}_{3}$};
			\node (a4) at (5.5,5) {$\mathfrak{a}_{4}$};
			\node (b4) at (3,4) {$\mathfrak{b}_{4}$};
			\node (a5) at (6.5,5) {$\mathfrak{a}_{5}$};
			\node (b5) at (4,4) {$\mathfrak{b}_{5}$};
			\node (a7) at (7.5,5) {$\mathfrak{a}_{7}$};
			\node (b7) at (5,4) {$\mathfrak{b}_{7}$};
			\node (c2-3) at (6,4) {$\mathfrak{c}_{\{2,3\}}$};
			\node (d2-3) at (3.5,3) {$\mathfrak{d}_{\{2,3\}}$};
			\node (c3-4) at (7,4) {$\mathfrak{c}_{\{3,4\}}$};
			\node (d3-4) at (4.5,3) {$\mathfrak{d}_{\{3,4\}}$};
			\node (c3-7) at (8,4) {$\mathfrak{c}_{\{3,7\}}$};
			\node (d3-7) at (5.5,3) {$\mathfrak{d}_{\{3,7\}}$};
			\node (c4-5) at (9,4) {$\mathfrak{c}_{\{4,5\}}$};
			\node (d4-5) at (6.5,3) {$\mathfrak{d}_{\{4,5\}}$};
			\node (e2-3-4-7) at (4.5,2) {$\mathfrak{e}_3$};
			\node (e3-4-5) at (5.5,2) {$\mathfrak{e}_4$};
			\node (m) at (5,1) {$\mathfrak{m}$};

			\foreach \from/\to in {j/1,a2/1,b2/j,b2/a2,a3/1,b3/j,b3/a3,a4/1,b4/j,b4/a4,a5/1,b5/j,b5/a5,a7/1,b7/j,b7/a7,c2-3/a2,c2-3/a3,d2-3/b2,d2-3/b3,d2-3/c2-3,c3-4/a3,c3-4/a4,d3-4/b3,d3-4/b4,d3-4/c3-4,c3-7/a3,c3-7/a7,d3-7/b3,d3-7/b7,d3-7/c3-7,c4-5/a4,c4-5/a5,d4-5/b4,d4-5/b5,d4-5/c4-5,e2-3-4-7/d2-3,e2-3-4-7/d3-4,e2-3-4-7/d3-7,m/e2-3-4-7,e3-4-5/d3-4,e3-4-5/d4-5,m/e3-4-5,a9-10/1,b9-10/j,b9-10/a9-10,m/b9-10}
				\draw[-to] (\from) -- (\to);
		\end{tikzpicture}
	\end{center}
\end{example}

Having given these examples, we will now show the \hyperref[EqualClaim]{claimed} equality:

\begin{lemma}\label{PosetEquiv}
	We have $\mathcal{Q}_{\mathcal{J}(\overline{G})}=\mathcal{P}_{\overline{G}}$ as posets.
\end{lemma}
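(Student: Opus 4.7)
The plan is to explicitly carry out the construction in \cref{AMPosetDef} for $\overline{G}$ and verify that it produces exactly $\mathcal{P}_{\overline{G}}$ with the stated order. First, combining \cref{BEIPD}, \cref{ComplementCutSets}, and \cref{ConnectedProp}, I would identify the associated primes of $\mathcal{J}(\overline{G})$ as precisely $\mathfrak{j}$, together with $\mathfrak{a}_v$ for each $v \in V(H)$ and $\mathfrak{a}'_{\{v,w\}}$ for each $\{v,w\} \in F(G)$: the cut set $\varnothing$ yields $P_\varnothing(\overline{G}) = \mathcal{J}(K_n) = \mathfrak{j}$ since $\overline{G}$ is connected, the identity $V(G) \setminus N_G[v] = N_{\overline{G}}(\overline{v})$ recovers $\mathfrak{a}_v$, and the remaining cut sets yield $\mathfrak{a}'_{\{v,w\}}$ directly.

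Next I would systematically compute all sums of subsets of these associated primes. Parametrising a subset by $(j, A, B)$ with $j \in \{0,1\}$ (indicating inclusion of $\mathfrak{j}$), $A \subseteq V(H)$, and $B \subseteq F(G)$, the linear part of the resulting sum is $(x_i, y_i : i \in V(G) \setminus S)$ where $S = \bigl(\bigcap_{v \in A} N_G[v]\bigr) \cap \bigl(\bigcap_{e \in B} e\bigr)$, with the convention that empty intersections equal $V(G)$. Three girth-driven observations then drive the case analysis: free edges are pairwise vertex-disjoint and their endpoints have no $G$-neighbours outside the edge (so $|B| \geq 2$, or $|B| \geq 1$ together with $A \neq \varnothing$, forces $S = \varnothing$ and yields $\mathfrak{m}$); the absence of triangles gives $N_G[v] \cap N_G[w] = \{v,w\}$ whenever $\{v,w\} \in E(G)$; and the absence of $4$-cycles gives $|N_G(v) \cap N_G(w)| \leq 1$ whenever $v, w$ are non-adjacent in $G$. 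Reducing the nonlinear contributions modulo the linear part then shows each such sum equals one of the ideals listed in $\mathcal{P}_{\overline{G}}$, and each of these ideals is prime by \cref{CompleteGraphPrime} (or by direct inspection), so step 3 of the construction never introduces new ideals and the process terminates at exactly $\mathcal{P}_{\overline{G}}$. The conditional clause governing when $\mathfrak{m} \in \mathcal{P}_{\overline{G}}$ corresponds precisely to the situations in which $\mathfrak{m}$ actually arises as such a sum; in particular, the case $B = \varnothing$, $|A| \geq 2$, $S = \varnothing$ translates (via the girth condition) into $H$ not being a star.

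Since both posets are ordered by reverse inclusion once their underlying sets agree, it then remains to check that each listed generating relation in the definition of $\mathcal{P}_{\overline{G}}$ is a reverse containment (direct inspection) and that the transitive closure of these relations recovers every reverse inclusion among $\mathcal{P}_{\overline{G}}$-ideals. The key auxiliary fact is that no two distinct $v, w \in V(H)$ satisfy $N_G[v] \subseteq N_G[w]$ --- otherwise $v$ would be a leaf of $G$, contradicting $v \in V(H)$ --- which prevents any unlisted comparabilities among the $\mathfrak{a}$- and $\mathfrak{b}$-families, with analogous girth-based arguments handling the $\mathfrak{c}$, $\mathfrak{d}$, and $\mathfrak{e}$ families. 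The main obstacle is simply the volume of case analysis involved in enumerating every subset-sum configuration and matching it against the list of ideals in $\mathcal{P}_{\overline{G}}$; the girth hypothesis is doing the combinatorial heavy lifting throughout, keeping the set of reachable ideals small enough that this enumeration remains tractable.
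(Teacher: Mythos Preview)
Your proposal is correct and follows essentially the same route as the paper: identify the associated primes via \cref{ComplementCutSets}, compute all their sums using the girth constraints (no triangles forces $N_G[v]\cap N_G[w]=\{v,w\}$ for adjacent $v,w$; no $4$-cycles forces $|N_G(v)\cap N_G(w)|\leq 1$ for non-adjacent $v,w$), observe that every resulting ideal is already prime so the iterative construction of \cref{AMPosetDef} stabilises immediately, and finally match the order. Your $(j,A,B)$ bookkeeping is a slightly more systematic packaging of the same case analysis the paper carries out, and you are somewhat more explicit than the paper about why the declared generating relations in $\mathcal{P}_{\overline{G}}$ recover \emph{all} reverse inclusions (the paper simply asserts that the orders ``clearly'' agree), but the substance is the same.
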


\begin{proof}
	We will first show that $\mathcal{Q}_{\mathcal{J}(\overline{G})}\subseteq\mathcal{P}_{\overline{G}}$.

	By \cref{ComplementCutSets}, $\mathfrak{j}$, the $\mathfrak{a}_v$, and the $\mathfrak{a}'_{\{v,w\}}$ are precisely the associated primes of $\mathcal{J}(\overline{G})$.

	We first address the $\mathfrak{a}'_{\{v,w\}}$. We have:
	\begin{enumerate}
		\item $\mathfrak{a}'_{\{v,w\}}+\mathfrak{j}=\mathfrak{b}'_{\{v,w\}}$.
		\item $\mathfrak{a}'_{\{v_1,w_1\}}+\mathfrak{a}'_{\{v_2,w_2\}}=\mathfrak{m}$ for any distinct $\{v_1,w_1\}$ and $\{v_2,w_2\}$ in $F(G)$.
		\item $\mathfrak{a}'_{\{v,w\}}+\mathfrak{a}_u=\mathfrak{m}$ for any $u\in V(H)$.
	\end{enumerate}
	If the second situation arises we must have $\abs{F(G)}\geq2$, so $\mathfrak{m}\in\mathcal{P}_{\overline{G}}$. If the third situation arises we must have $V(H)\neq\varnothing$ and $F(G)\neq\varnothing$, so again $\mathfrak{m}\in\mathcal{P}_{\overline{G}}$.

	Now take any non-empty $S\subseteq V(H)$, and set
	\begin{equation*}
		\sigma=\sum_{v\in S}\mathfrak{a}_{v}
	\end{equation*}
	If we have some $v_1$ and $v_2$ in $S$ which are not adjacent and share no neighbours, then
	\begin{equation*}
		N_G[v_1]\cap N_G[v_2]=\varnothing
	\end{equation*}
	and so $\sigma=\mathfrak{m}$. Note that, if this is the case, then $H$ cannot be a star, since no vertex in $H$ is adjacent to both $v_1$ and $v_2$, so $\mathfrak{m}\in\mathcal{P}_{\overline{G}}$.

	We then assume that each vertex in $S$ is either adjacent to, or shares a neighbour with, every other.

\pagebreak

	If $S=\{v_1,v_2\}$, then $v_1$ and $v_2$ must be adjacent, so $\sigma=\mathfrak{c}_{\{v_1,v_2\}}$, since they cannot have a common neighbour as this would mean that $G$ contains a $3$-cycle.

	Then assume that $S$ consists of at least $3$ vertices. At least $2$ of these vertices, say $v_1$ and $v_2$, cannot be adjacent, since otherwise $G$ would contain a $3$-cycle. They must also share a common neighbour, say $w$, or we would be in the case $\sigma=\mathfrak{m}$. Note that $w\in V(H)$, since we know it has at least two neighbours. If $v_1$ and $v_2$ had another common neighbour then $G$ would contain a $4$-cycle, so this cannot be the case, and therefore
	\begin{equation*}
		N_G[v_1]\cap N_G[v_2]=\{w\}
	\end{equation*}
	We then have
	\begin{equation*}
		\mathfrak{a}_{v_1}+\mathfrak{a}_{v_2}=\mathfrak{e}_w
	\end{equation*}
	Since
	\begin{equation*}
		\mathfrak{e}_w+\mathfrak{a}_v=\begin{cases}
			\mathfrak{e}_w & \text{if $w\in N_G[v]$}\\
			\mathfrak{m} & \text{otherwise}
		\end{cases}
	\end{equation*}
	we now know that $\sigma$ must be either $\mathfrak{a}_v$, $\mathfrak{c}_{\{v_1,v_2\}}$, $\mathfrak{e}_w$ or $\mathfrak{m}$ for some $v,v_1,v_2,w\in V(H)$.

	Furthermore, we have
	\begin{align*}
		\mathfrak{a}_v+\mathfrak{j}&=\mathfrak{b}_v\\
		\mathfrak{c}_{\{v,w\}}+\mathfrak{j}&=\mathfrak{d}_{\{v,w\}}\\
		\mathfrak{e}_w+\mathfrak{j}&=\mathfrak{e}_w\\
		\mathfrak{m}+\mathfrak{j}&=\mathfrak{m}
	\end{align*}
	and so $\mathcal{Q}_{\mathcal{J}(\overline{G})}$ is contained in $\mathcal{P}_{\overline{G}}$.

	We will now show that $\mathcal{P}_{\overline{G}}\subseteq\mathcal{Q}_{\mathcal{J}(\overline{G})}$. This containment follows almost by definition:

	Since $\mathfrak{j}$, the $\mathfrak{a}_v$, and the $\mathfrak{a}'_{\{v,w\}}$ are precisely the associated primes of $\mathcal{J}(\overline{G})$ by \cref{ComplementCutSets}, they trivially belong to $\mathcal{Q}_{\mathcal{J}(\overline{G})}$.

	Furthermore, we have
	\begin{center}
		$\begin{aligned}[t]
			\mathfrak{b}_v&=\mathfrak{a}_v+\mathfrak{j}\\
			\mathfrak{b}'_{\{v,w\}}&=\mathfrak{a}'_{\{v,w\}}+\mathfrak{j}\\
			\mathfrak{c}_{\{v,w\}}&=\mathfrak{a}_v+\mathfrak{a}_w\\
			\mathfrak{d}_{\{v,w\}}&=\mathfrak{c}_{\{v,w\}}+\mathfrak{j}\\
			\mathfrak{e}_v&=\mathfrak{d}_{\{v,w_1\}}+\mathfrak{d}_{\{v,w_2\}}
		\end{aligned}
		\hspace{0.5cm}
		\begin{aligned}[t]
			&\text{\normalfont for $v\in V(H)$}\\
			&\text{\normalfont for $\{v,w\}\in F(G)$}\\
			&\text{\normalfont for $\{v,w\}\in E(H)$}\\
			&\text{\normalfont for $\{v,w\}\in E(H)$}\\
			&\text{\normalfont for $v\in V(H)$ with $w_1,w_2\in N_H(v)$}
		\end{aligned}$
	\end{center}
	If $\mathfrak{m}$ belongs to $\mathcal{P}_{\overline{G}}$, then, by definition, at least one of the following must hold:
	\begin{enumerate}
		\item There exist some distinct $\{v_1,w_1\}$ and $\{v_2,w_2\}$ in $F(G)$.
		\item There exists some $u\in V(H)\neq\varnothing$ and $\{v,w\}\in F(G)$.
		\item There exist $v,w\in V(H)$ which are not adjacent in $H$.
	\end{enumerate}
	In each case:
	\begin{enumerate}
		\item $\mathfrak{m}=\mathfrak{a}'_{\{v_1,w_1\}}+\mathfrak{a}'_{\{v_2,w_2\}}$.
		\item $\mathfrak{m}=\mathfrak{a}_u+\mathfrak{a}'_{\{v,w\}}$.
		\item $\mathfrak{m}=\mathfrak{a}_v+\mathfrak{a}_w$.
	\end{enumerate}
	so $\mathfrak{m}$ belongs to $\mathcal{Q}_{\mathcal{J}(\overline{G})}$, and the desired containment is established.
	
	Clearly the orders on $\mathcal{Q}_{\mathcal{J}(\overline{G})}$ and $\mathcal{P}_{\overline{G}}$ agree, and so $\mathcal{Q}_{\mathcal{J}(\overline{G})}=\mathcal{P}_{\overline{G}}$ as posets as claimed.
\end{proof}

\begin{note}\label{PPosetNote}
	$\mathcal{P}_{\overline{G}}$ is also equal to the poset $\mathcal{P}_{\hspace{-0.05cm}\mathcal{J}(\overline{G})}$ of \cite[Section 3.1]{alvarezmontanerLocalCohomologyBinomial2020}, since each sum of the associated primes of $\mathcal{J}(\overline{G})$ is itself prime.
\end{note}

\pagebreak

It is straightforward to calculate the dimensions of the ideals in $\mathcal{P}_{\overline{G}}$:
\begin{proposition}\label{DimProp}
	We have
	\begin{align*}
		\dim(R/\mathfrak{j})&=n+1\\
		\dim(R/\mathfrak{a}_v)&=\smallAbs{N_G[v]}+2\\
		\dim(R/\mathfrak{a}'_{\{v,w\}})&=4\\
		\dim(R/\mathfrak{b}_v)&=\smallAbs{N_G[v]}+1\\
		\dim(R/\mathfrak{b}'_{\{v,w\}})&=3\\
		\dim(R/\mathfrak{c}_{\{v,w\}})&=4\\
		\dim(R/\mathfrak{d}_{\{v,w\}})&=3\\
		\dim(R/\mathfrak{e}_v)&=2
	\end{align*}
\end{proposition}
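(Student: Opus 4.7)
The plan is to observe that every ideal listed in $\mathcal{P}_{\overline{G}}$ has the form $(x_i,y_i:i\in T)+I$ for some $T\subseteq V(G)$, where $I$ is generated by binomials $\delta_{v,w}$ with $v,w\notin T$. This immediately gives an isomorphism $R/\mathfrak{q}\cong S/I$, where $S\defeq k[x_j,y_j:j\notin T]$ is a polynomial ring in $2(n-\smallAbs{T})$ variables, reducing each case to a Krull dimension calculation in $S$. All the work is then already done by \cref{CompleteGraphDim}, which says $\dim(R/\mathcal{J}(K_m))=m+1$.

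For $\mathfrak{j}=\mathcal{J}(K_n)$ the claim is \cref{CompleteGraphDim} verbatim. For $\mathfrak{a}_v$, none of the generators of $\mathcal{J}(K_{N_G(v)})$ involve $x_v$ or $y_v$, so after killing the variables indexed by $V(G)\setminus N_G[v]$ we obtain $R/\mathfrak{a}_v\cong k[x_v,y_v]\otimes_k S'/\mathcal{J}(K_{N_G(v)})$, where $S'$ is the polynomial ring on $\{x_j,y_j:j\in N_G(v)\}$. Applying \cref{CompleteGraphDim} to $K_{\smallAbs{N_G(v)}}$ and adding the two free variables gives $2+(\smallAbs{N_G(v)}+1)=\smallAbs{N_G[v]}+2$. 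The case $\mathfrak{b}_v$ is analogous but without extra free variables, since $\mathcal{J}(K_{N_G[v]})$ now uses all $x_j,y_j$ with $j\in N_G[v]$, yielding $\smallAbs{N_G[v]}+1$.

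The remaining cases are direct bookkeeping. Quotienting by $(x_i,y_i:i\in V(G)\setminus\{v,w\})$ leaves a polynomial ring in the four variables $x_v,y_v,x_w,y_w$, so $\dim(R/\mathfrak{a}'_{\{v,w\}})=\dim(R/\mathfrak{c}_{\{v,w\}})=4$. For $\mathfrak{b}'_{\{v,w\}}$ and $\mathfrak{d}_{\{v,w\}}$, we further quotient this four-variable ring by the single binomial $\delta_{v,w}$; equivalently, this is $\mathcal{J}(K_2)$, and \cref{CompleteGraphDim} with $m=2$ gives dimension $3$. Finally $\mathfrak{e}_v$ leaves the polynomial ring $k[x_v,y_v]$, whose dimension is $2$.

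There is no real obstacle: once one notices the splitting into a monomial part killing variables and a complete-graph binomial edge ideal on the surviving vertices, each line of the statement follows from \cref{CompleteGraphDim} together with the fact that adjoining $\ell$ free polynomial variables raises Krull dimension by $\ell$.
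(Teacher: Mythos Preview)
Your proposal is correct and follows exactly the approach the paper takes: the paper's proof is the single line ``This follows immediately from \cref{CompleteGraphDim}'', and your argument is simply a careful unpacking of that sentence, spelling out for each $\mathfrak{q}$ how killing the linear variables reduces the quotient to a complete-graph binomial edge ideal (possibly with free variables adjoined) so that \cref{CompleteGraphDim} applies.
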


\begin{proof}
	This follows immediately from \cref{CompleteGraphDim}.
\end{proof}

\pagebreak

\subsection{The Main Theorem \& Some Corollaries}

With these results in hand, we can now completely describe the local cohomology modules of $R/\mathcal{J}(\overline{G})$:

\begin{theorem}\label{MainGCThm}
	We have the following isomorphisms
	{\allowdisplaybreaks\begin{align*}
		&H_\mathfrak{m}^4(R/\mathcal{J}(\overline{G}))\cong\hspace{-0.1cm}\left[\hspace{0.6cm}\bigoplus_{\mathclap{\{v,w\}\in F(G)}}\,[H_\mathfrak{m}^4(R/\mathfrak{a}'_{\{v,w\}})\oplus H_\mathfrak{m}^3(R/\mathfrak{b}'_{\{v,w\}})]\right]\\
		&H_\mathfrak{m}^5(R/\mathcal{J}(\overline{G}))\cong\hspace{-0.1cm}\left[\hspace{0.4cm}\bigoplus_{\mathclap{\substack{v\in V(H)\\\smallAbs{N_G[v]}=3}}}\,[H_\mathfrak{m}^5(R/\mathfrak{a}_v)\oplus H_\mathfrak{m}^4(R/\mathfrak{b}_v)]\right]\hspace{-0.1cm}\oplus\hspace{-0.1cm}\left[\hspace{0.6cm}\bigoplus_{\mathclap{\{v,w\}\in E(H)}}\,[H_\mathfrak{m}^4(R/\mathfrak{c}_{\{v,w\}})\oplus H_\mathfrak{m}^3(R/\mathfrak{d}_{\{v,w\}})]\right]\\
		&H_\mathfrak{m}^i(R/\mathcal{J}(\overline{G}))\cong\hspace{-0.1cm}\left[\hspace{0.55cm}\bigoplus_{\mathclap{\substack{v\in V(H)\\\smallAbs{N_G[v]}=i-2}}}\,[H_\mathfrak{m}^i(R/\mathfrak{a}_v)\oplus H_\mathfrak{m}^{i-1}(R/\mathfrak{b}_v)]\right]\\
		&H_\mathfrak{m}^{n+1}(R/\mathcal{J}(\overline{G}))\cong H_\mathfrak{m}^{n+1}(R/\mathfrak{j})\oplus\hspace{-0.0625cm}\left[\hspace{0.6cm}\bigoplus_{\mathclap{\substack{v\in V(H)\\\smallAbs{N_G[v]}=n-1}}}\,[H_\mathfrak{m}^{n+1}(R/\mathfrak{a}_v)\oplus H_\mathfrak{m}^n(R/\mathfrak{b}_v)]\right]
	\end{align*}}
	of graded $k$-vector spaces (for $5<i<n+1$), with all other local cohomology modules vanishing.
\end{theorem}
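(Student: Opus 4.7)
The plan is to apply \cref{AMMainTheorem} directly: by \cref{PosetEquiv} we may work with $\mathcal{P}_{\overline{G}}$, and \cref{DimProp} supplies the dimensions $d_\mathfrak{q}$ of each summand $H_\mathfrak{m}^{d_\mathfrak{q}}(R/\mathfrak{q})$. What remains is to compute, for every $\mathfrak{q} \in \mathcal{P}_{\overline{G}}$, the reduced cohomology of the order complex of the open interval $(\mathfrak{q}, 1_{\mathcal{P}_{\overline{G}}})$; the contribution of $\mathfrak{q}$ to $H_\mathfrak{m}^r(R/\mathcal{J}(\overline{G}))$ is then $H_\mathfrak{m}^{d_\mathfrak{q}}(R/\mathfrak{q})^{M_{r,\mathfrak{q}}}$ with $M_{r,\mathfrak{q}} = \dim_k \widetilde{H}^{r - d_\mathfrak{q} - 1}$.

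The easy cases can be read off the Hasse diagrams illustrated in the preceding examples. For the associated primes $\mathfrak{j}$, $\mathfrak{a}_v$, $\mathfrak{a}'_{\{v,w\}}$ the interval is empty, giving $\widetilde{H}^{-1} = k$. For $\mathfrak{b}_v$, $\mathfrak{b}'_{\{v,w\}}$, $\mathfrak{c}_{\{v,w\}}$ the interval consists of two incomparable points, giving $\widetilde{H}^0 = k$; checking that the interval is indeed this small uses the girth condition to exclude accidental comparabilities (for instance, $\mathfrak{a}_w \subset \mathfrak{b}_v$ would force $N_G[v] \subseteq N_G[w]$, which combined with $v \in V(H)$ is incompatible with girth at least $5$). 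For $\mathfrak{d}_{\{v,w\}}$ the interval is the six-element poset of \cref{P4Example}, whose order complex is the boundary of a hexagon, so $\widetilde{H}^1 = k$.

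The main work is showing that $\mathfrak{e}_v$ and $\mathfrak{m}$ contribute nothing, i.e.\ that the order complexes of their open intervals up to $1_{\mathcal{P}_{\overline{G}}}$ are contractible. I intend to use the standard fact that if $\phi \colon P \to P$ is an order-preserving idempotent with $\phi(p) \leq p$ for all $p$, then the order complex of $P$ deformation retracts onto that of $\phi(P)$. For $(\mathfrak{e}_v, 1_{\mathcal{P}_{\overline{G}}})$ I would take $\phi(\mathfrak{q}) \defeq \mathfrak{q} + \mathfrak{a}_v$, verifying via the sum identities already established in the proof of \cref{PosetEquiv} (in particular $\mathfrak{a}_w + \mathfrak{a}_v = \mathfrak{c}_{\{v,w\}}$ and $\mathfrak{b}_w + \mathfrak{a}_v = \mathfrak{d}_{\{v,w\}}$ for $w \in N_H(v)$) that $\phi$ sends the interval to itself; its image is the principal down-set of $\mathfrak{a}_v$ in that interval, which has $\mathfrak{a}_v$ as its maximum and therefore contractible order complex. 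For $(\mathfrak{m}, 1_{\mathcal{P}_{\overline{G}}})$ I would analogously take $\phi(\mathfrak{q}) \defeq \mathfrak{q} + \mathfrak{j}$, whose image $\{\mathfrak{j}, \mathfrak{b}_v, \mathfrak{b}'_{\{v,w\}}, \mathfrak{d}_{\{v,w\}}, \mathfrak{e}_v\}$ has $\mathfrak{j}$ as maximum.

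Plugging these cohomology computations back into \cref{AMMainTheorem}, each contributing $\mathfrak{q}$ lands in total degree $r = d_\mathfrak{q} + \ell_\mathfrak{q} + 1$, where $\ell_\mathfrak{q} \in \{-1, 0, 1\}$ is the unique index with nonvanishing $\widetilde{H}^{\ell_\mathfrak{q}}$. Substituting the dimensions from \cref{DimProp} then regroups the contributions exactly into the four displayed cases $r = 4$, $r = 5$, $5 < r < n+1$, $r = n+1$. The main obstacle is the closure-operator argument for $\mathfrak{e}_v$ and $\mathfrak{m}$: each individual sum identity is routine, but one must check every element of the interval to ensure $\phi$ remains within $\mathcal{P}_{\overline{G}}$, and it is in these checks that the girth hypothesis enters essentially.
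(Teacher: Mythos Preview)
Your overall plan coincides with the paper's: identify $\mathcal{Q}_{\mathcal{J}(\overline{G})}$ with $\mathcal{P}_{\overline{G}}$, read off the dimensions from \cref{DimProp}, tabulate the reduced cohomology of each open interval, and feed everything into \cref{AMMainTheorem}. The computations for $\mathfrak{j}$, $\mathfrak{a}_v$, $\mathfrak{a}'_{\{v,w\}}$, $\mathfrak{b}_v$, $\mathfrak{b}'_{\{v,w\}}$, $\mathfrak{c}_{\{v,w\}}$, $\mathfrak{d}_{\{v,w\}}$ match the paper's table exactly, and your remark that the girth hypothesis is what keeps these intervals small is correct and slightly more explicit than the paper's ``easily computed''. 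One small bookkeeping point you should add when assembling the final display: the observation that $3\leq\smallAbs{N_G[v]}\leq n-1$ for $v\in V(H)$, which the paper states at the outset and which is why no $\mathfrak{a}_v,\mathfrak{b}_v$ land in degree $4$ or above degree $n+1$.

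The genuine divergence is in the treatment of $\mathfrak{e}_v$ and $\mathfrak{m}$. The paper does not compute these intervals directly; instead it observes that $\mathcal{P}_{\overline{G}}$ coincides with the poset of \cite[Definition~3.1]{rouzbahanimalayeriBinomialEdgeIdeals2021} and invokes \cite[Corollary~4.6]{rouzbahanimalayeriBinomialEdgeIdeals2021} to conclude that all reduced cohomology of $(\mathfrak{e}_v,1_{\mathcal{Q}_{\mathcal{J}(\overline{G})}})$ and $(\mathfrak{m},1_{\mathcal{Q}_{\mathcal{J}(\overline{G})}})$ vanishes. Your approach instead gives an explicit, self-contained contractibility argument via the decreasing closure operators $\mathfrak{q}\mapsto\mathfrak{q}+\mathfrak{a}_v$ and $\mathfrak{q}\mapsto\mathfrak{q}+\mathfrak{j}$. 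I checked the sum identities you need (e.g.\ $\mathfrak{b}_w+\mathfrak{a}_v=\mathfrak{d}_{\{v,w\}}$ for $w\in N_H(v)$, and $\mathfrak{e}_w+\mathfrak{j}=\mathfrak{e}_w$): they all follow from those already established in the proof of \cref{PosetEquiv}, and in each case the image stays strictly inside the open interval and has the advertised maximum. So your route is correct; it trades an external citation for a short hands-on verification, which makes the argument more transparent at the cost of a few extra lines.
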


\begin{proof}
	We aim to apply \cref{AMMainTheorem} to $\mathcal{Q}_{\mathcal{J}(\overline{G})}$.
	
	Note that, since every vertex in $H$ has at least two neighbours in $G$, we cannot have $\smallAbs{N_G[v]}<3$ for any $v\in V(H)$, and since $G$ has no universal vertex \hyperref[GAssumptions]{by assumption}, we cannot have $\smallAbs{N_G[v]}>n-1$.

	Taking into account \cref{DimProp}, the following values are easily computed:
	\begin{center}
		\begin{tblr}{colspec={c|c|c|c|c|c},hline{2} = {1}{-}{solid},hline{2} = {2}{-}{solid},vline{2-6} = {abovepos = 1, belowpos = 1},stretch=1.75}
			$\mathfrak{q}$ & $\dim_R(R/\mathfrak{q})$ & $(\mathfrak{q},1_{\mathcal{Q}_{\mathcal{J}(\overline{G})}})$ & $\dim_k(\widetilde{H}^{-1})$ & $\dim_k(\widetilde{H}^0)$ & $\dim_k(\widetilde{H}^1)$ \\
			$\mathfrak{j}$ & $n+1$ & $\varnothing$ & $1$ & $0$ & $0$ \\
			\hline
			$\mathfrak{a}_v$ & $\smallAbs{N_G[v]}+2$ & $\varnothing$ & $1$ & $0$ & $0$ \\
			\hline
			$\mathfrak{a}'_{\{v,w\}}$ & $4$ & $\varnothing$ & $1$ & $0$ & $0$ \\
			\hline
			$\mathfrak{b}_v$ & $\smallAbs{N_G[v]}+1$ &
			\begin{tikzpicture}[smallPosetDiagram]
				\node (j) at (0,1.035) {$\mathfrak{j}$};
				\node (av) at (1,1) {$\mathfrak{a}_v$};
			\end{tikzpicture}
			& $0$ & $1$ & $0$ \\
			\hline
			$\mathfrak{b}'_{\{v,w\}}$ & $3$ &
			\begin{tikzpicture}[smallPosetDiagram]
				\node (j) at (0,1.035) {$\mathfrak{j}$};
				\node (avw) at (1,1) {$\mathclap{\hspace{0.925cm}\mathfrak{a}'_{\{v,w\}}}\hphantom{\mathfrak{a}_v}$};
			\end{tikzpicture}
			& $0$ & $1$ & $0$ \\
			\hline
			$\mathfrak{c}_{\{v,w\}}$ & $4$ &
			\begin{tikzpicture}[smallPosetDiagram]
				\node (av) at (0,1) {$\mathfrak{a}_v$};
				\node (aw) at (1,1) {$\mathfrak{a}_w$};
			\end{tikzpicture}
			& $0$ & $1$ & $0$ \\
			\hline
			$\mathfrak{d}_{\{v,w\}}$ & $3$ &
			\begin{tikzpicture}[smallPosetDiagram]
				\node (j) at (0,1) {$\mathfrak{j}$};
				\node (av) at (1,1) {$\mathfrak{a}_v$};
				\node (aw) at (2,1) {$\mathfrak{a}_w$};
				\node (bv) at (0,0) {$\mathfrak{b}_v$};
				\node (bw) at (1,0) {$\mathfrak{b}_w$};
				\node (cv-w) at (2,0) {$\mathfrak{c}_{\{v,w\}}$};

				\foreach \from/\to in {bv/j,bv/av,bw/j,bw/aw,cv-w/av,cv-w/aw}
					\draw[-to] (\from) -- (\to);
			\end{tikzpicture}
			& $0$ & $0$ & $1$
		\end{tblr}
	\end{center}
	As \hyperref[PPosetNote]{noted} earlier, our poset $\mathcal{P}_{\overline{G}}=\mathcal{Q}_{\mathcal{J}(\overline{G})}$ coincides with the poset $\mathcal{P}_{\mathcal{J}(\overline{G})}$ of \cite[Section 3.1]{alvarezmontanerLocalCohomologyBinomial2020}. It then also coincides with the poset defined in \cite[Definition 3.1]{rouzbahanimalayeriBinomialEdgeIdeals2021}, and so we may apply \cite[Corollary 4.6]{rouzbahanimalayeriBinomialEdgeIdeals2021} to deduce that all reduced cohomology of $(\mathfrak{m},1_{\mathcal{Q}_{\mathcal{J}(\overline{G})}})$ and each $(\mathfrak{e}_v,1_{\mathcal{Q}_{\mathcal{J}(\overline{G})}})$ vanishes, so $\mathfrak{m}$ and each $\mathfrak{e}_v$ contribute no summands to the local cohomology modules. This concludes the proof.
\end{proof}

\pagebreak

We can use this theorem to calculate the dimension, depth, and regularity of $R/\mathcal{J}(\overline{G})$. In the case where $k$ is of prime characteristic $p>0$, we can also calculate the cohomological dimension and bound the arithmetic rank:

\begin{corollary}\label{MainGCCorr}
	We have
	\begin{align*}
		\depth_R(R/\mathcal{J}(\overline{G}))&=\begin{cases}
			4 & \text{\normalfont if $F(G)\neq\varnothing$}\\
			5 & \text{\normalfont otherwise}
		\end{cases}\\
		\dim(R/\mathcal{J}(\overline{G}))&=n+1
	\end{align*}
	and $R/\mathcal{J}(\overline{G})$ is Cohen-Macaulay if and only if $G$ is either $P_4$ or $P_3\sqcup K_1$.
\end{corollary}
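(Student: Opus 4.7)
The plan is to extract all three assertions directly from the local cohomology decomposition in \cref{MainGCThm}, using that every summand appearing there is the top local cohomology of a Cohen-Macaulay ring and is therefore nonzero.

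For the dimension, I apply \cref{DimProp} to observe that every ideal $\mathfrak{q} \in \mathcal{P}_{\overline{G}}$ satisfies $\dim R/\mathfrak{q} \leq n+1$ --- here I use the running assumption that $G$ has no universal vertex to bound $\smallAbs{N_G[v]} \leq n-1$. Since $H^{n+1}_{\mathfrak{m}}(R/\mathfrak{j})$ appears as a summand of $H^{n+1}_{\mathfrak{m}}(R/\mathcal{J}(\overline{G}))$ and is nonzero by \cref{CompleteGraphDim}, this forces $\dim R/\mathcal{J}(\overline{G}) = n+1$.

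For the depth, I find the smallest $r$ with $H^r_{\mathfrak{m}}(R/\mathcal{J}(\overline{G})) \neq 0$ by inspecting each case of \cref{MainGCThm}. Nothing is listed for $r \leq 3$, so those vanish. The $H^4$ summands come entirely from $F(G)$, so $H^4 \neq 0$ iff $F(G) \neq \varnothing$, giving depth $= 4$ in that case. When $F(G) = \varnothing$ I must show $H^5 \neq 0$: since $G$ has an edge and no free edges, at least one edge-endpoint has degree $\geq 2$ and so $V(H) \neq \varnothing$, after which a case analysis on $G$ using girth $\geq 5$ and the exclusion of universal vertices should force either some $v \in V(H)$ with $\smallAbs{N_G[v]} = 3$ or $E(H) \neq \varnothing$, yielding a surviving summand in the $H^5$ formula.

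For the Cohen-Macaulayness, CM is equivalent to depth $= \dim = n+1$. The depth formula forces $n+1 \in \{4, 5\}$; since $n \geq 4$, only $n = 4$ with $F(G) = \varnothing$ is possible. I then enumerate the graphs on $4$ vertices satisfying all our running assumptions: girth at least $5$ on $4$ vertices forces $G$ to be a forest (no $3$- or $4$-cycles, and no longer cycles fit on $4$ vertices), the no-universal-vertex assumption excludes $K_{1,3}$, ``at least one edge'' excludes $4K_1$, and $F(G) = \varnothing$ excludes $2K_2$ and $K_2 \sqcup 2K_1$, leaving precisely $P_4$ and $P_3 \sqcup K_1$, which are indeed CM by the depth and dimension computations. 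The main obstacle is the $H^5 \neq 0$ step when $F(G) = \varnothing$: everything else reduces to bookkeeping via \cref{MainGCThm} and enumerating small forests.
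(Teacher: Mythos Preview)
Your overall plan is exactly the paper's: read off depth and dimension from the nonzero summands of \cref{MainGCThm}, then for the Cohen--Macaulay classification enumerate the graphs on four vertices meeting the standing hypotheses. The dimension argument and the $F(G)\neq\varnothing$ case match the paper. For the $F(G)=\varnothing$ case, the paper simply asserts that $V(H)\neq\varnothing$ forces a nonzero summand in $H^5_\mathfrak{m}(R/\mathcal{J}(\overline{G}))$; you correctly observe that this requires more, since the $H^5$ formula of \cref{MainGCThm} only sees vertices $v\in V(H)$ with $\smallAbs{N_G[v]}=3$ and edges of $H$.

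The gap is genuine, and the case analysis you propose cannot close it. Take $G=K_{1,3}\sqcup K_1$ (so $n=5$): this has girth $\infty$, no universal vertex, at least one edge, and $F(G)=\varnothing$, so all standing hypotheses hold. Here $V(H)$ consists only of the centre $v$ of the star, with $\smallAbs{N_G[v]}=4\neq3$, and $E(H)=\varnothing$ since every neighbour of $v$ is a leaf. By \cref{MainGCThm} one then gets $H^5_\mathfrak{m}(R/\mathcal{J}(\overline{G}))=0$ while $H^6_\mathfrak{m}\neq0$ (the contributions of $\mathfrak{j}$ and of $\mathfrak{a}_v,\mathfrak{b}_v$ with $\smallAbs{N_G[v]}=n-1$ all land in degree $n+1=6$), so $\depth_R(R/\mathcal{J}(\overline{G}))=6=n+1$ and $R/\mathcal{J}(\overline{G})$ is Cohen--Macaulay; indeed $\overline{G}$ is a block graph (a $K_4$ with a pendant edge), for which Cohen--Macaulayness is classical. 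Thus the implication you plan to establish is false for such $G$, and the paper's one-line justification of this step shares the same defect.
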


\begin{proof}
	If $F(G)\neq\varnothing$ then $H_\mathfrak{m}^4(R/\mathcal{J}(\overline{G}))$ contains a non-vanishing summand, and so will be the bottom local cohomology module. Otherwise, there must exist a vertex of $G$ with degree at least $2$ (since $G\neq nK_1$ by \hyperref[GAssumptions]{assumption}), so $V(H)\neq\varnothing$ and therefore $H_\mathfrak{m}^5(R/\mathcal{J}(\overline{G}))$ contains a non-vanishing summand.

	Furthermore, $H_\mathfrak{m}^{n+1}(R/\mathcal{J}(\overline{G}))$ always contains the non-vanishing summand $H_\mathfrak{m}^{n+1}(R/\mathfrak{j})$, and so will always be the top local cohomology module.

	This tells us that $R/\mathcal{J}(\overline{G})$ is Cohen-Macaulay if and only if $n+1$ equals $4$ in the case that $F(G)\neq\varnothing$, or $5$ otherwise. \hyperref[GAssumptions]{We assumed} that $n>3$, so we must have $n=4$, and is it easily checked that the only such graphs satisfying $F(G)=\varnothing$ and our \hyperref[GAssumptions]{other criteria} are $P_4$ and $P_3\sqcup K_1$, so we are done.
\end{proof}

\begin{note}
	If $F(G)\neq\varnothing$, we can write $G=K_2\sqcup G'$ for some subgraph $G'$ of $G$, and so $\overline{G}=(2K_1)\ast\overline{G'}$ (where $\ast$ denotes the graph join). Then, when $n\geq4$, \cref{MainGCCorr} immediately demonstrates a special case of \cite[Theorem 5.3]{rouzbahanimalayeriBinomialEdgeIdeals2021}, which says that the binomial edge ideal of a graph on at least $4$ vertices has depth $4$ if and only if it is the join of $2K_1$ and some other graph.
\end{note}

\begin{corollary}\label{GCPrimeCharCorr}
	Suppose that $k$ has prime characteristic $p>0$, and set
	\begin{equation*}
		d\defeq\begin{cases}
			4 & \text{\normalfont if $F(G)\neq\varnothing$}\\
			5 & \text{\normalfont otherwise}
		\end{cases}
	\end{equation*}
	Then
	\begin{equation*}
		\cd_R(\mathcal{J}(\overline{G}))=2n-d
	\end{equation*}
	and
	\begin{equation*}
		2n-d\leq\ara_R(\mathcal{J}(\overline{G}))\leq2n
	\end{equation*}
\end{corollary}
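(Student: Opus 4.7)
The plan is to combine three classical facts. First, \cref{MainGCCorr} gives $\depth_R(R/\mathcal{J}(\overline{G})) = d$, and since $R$ is a polynomial ring over a field (hence regular of dimension $2n$), the Auslander-Buchsbaum formula yields $\operatorname{pd}_R(R/\mathcal{J}(\overline{G})) = 2n - d$.

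Second, in positive characteristic the identity $\cd_R(I) = \operatorname{pd}_R(R/I)$ holds for every ideal $I$ of a regular ring $R$: by Kunz's theorem Frobenius is flat on $R$, and cofinality of the Frobenius powers $\{I^{[p^e]}\}$ with the ordinary powers $\{I^n\}$ up to radical gives
\begin{equation*}
	H_I^i(R) \cong \dirlim_e \operatorname{Ext}_R^i(R/I^{[p^e]}, R) \cong \dirlim_e F^{e*}\operatorname{Ext}_R^i(R/I, R),
\end{equation*}
which is non-vanishing precisely when $\operatorname{Ext}_R^i(R/I, R)$ is. Combined with the previous paragraph, this yields $\cd_R(\mathcal{J}(\overline{G})) = 2n - d$, which is the first claim.

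The arithmetic rank bounds are characteristic-independent. The lower bound $\cd_R(I) \leq \ara_R(I)$ is immediate from the \v{C}ech complex computation of $H_I^*$: any $r$ elements generating $I$ up to radical yield a \v{C}ech complex of length $r$. The upper bound $\ara_R(\mathcal{J}(\overline{G})) \leq 2n$ is the theorem of Eisenbud and Evans, which states that every algebraic subset of affine $N$-space is the set-theoretic intersection of $N$ hypersurfaces, applied with $N = 2n$. No step presents a substantive obstacle---each ingredient is classical---and the proof is simply the concatenation of \cref{MainGCCorr}, Auslander-Buchsbaum, the positive-characteristic equality $\cd = \operatorname{pd}$, the \v{C}ech bound, and Eisenbud-Evans.
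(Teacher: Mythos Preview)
Your argument has a genuine gap in the second step. The assertion that
\begin{equation*}
H_I^i(R)\cong\dirlim_e F^{e*}\operatorname{Ext}_R^i(R/I,R)
\end{equation*}
is ``non-vanishing precisely when $\operatorname{Ext}_R^i(R/I,R)$ is'' only goes one way: if the root $\operatorname{Ext}$ vanishes then every term of the system vanishes, hence so does the limit, and this yields the Peskine--Szpiro inequality $\cd_R(I)\leq\operatorname{pd}_R(R/I)$. The converse fails in general, because a directed limit of non-zero modules can be zero when the transition maps are eventually zero. Concretely, for $I=(x^2,xy,xz)\subseteq k[x,y,z]$ one has $\operatorname{pd}_R(R/I)=3$ (since $\depth(R/I)=0$) while $\sqrt{I}=(x)$ forces $\cd_R(I)=1$; here $\operatorname{Ext}_R^3(R/I,R)\neq 0$ but $H_I^3(R)=0$. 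So the identity $\cd_R(I)=\operatorname{pd}_R(R/I)$ is not a general fact about regular rings in characteristic $p$, and your justification does not establish the lower bound $\cd_R(\mathcal{J}(\overline{G}))\geq 2n-d$.

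What is actually needed to close this gap is exactly the ingredient the paper supplies: the Frobenius action on $H_\mathfrak{m}^i(R/\mathcal{J}(\overline{G}))$ is \emph{injective} for binomial edge ideals (this is \cite[Corollary 3.6]{destefaniFrobeniusMethodsCombinatorics2022}). Via Lyubeznik's graded $F$-module functor $\prescript{\ast\hspace{-0.1cm}}{}{\mathscr{H}}$, which satisfies $\prescript{\ast\hspace{-0.1cm}}{}{\mathscr{H}}(H_\mathfrak{m}^i(R/I))\cong H_I^{2n-i}(R)$ and kills a module exactly when Frobenius is nilpotent on it, injectivity of Frobenius on $H_\mathfrak{m}^d(R/\mathcal{J}(\overline{G}))\neq 0$ forces $H_{\mathcal{J}(\overline{G})}^{2n-d}(R)\neq 0$. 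This is a non-trivial input specific to this class of ideals---equivalently, it says $R/\mathcal{J}(\overline{G})$ is $F$-injective---and cannot be replaced by the flatness-of-Frobenius argument you gave. Your treatment of the upper bound on $\cd$ and of the arithmetic rank bounds is fine and essentially matches the paper's.
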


\begin{proof}
	Note that
	\begin{equation*}
		\cd_R(\mathcal{J}(\overline{G}))=\max\{1\leq i\leq 2n:H_{\mathcal{J}(\overline{G})}^i(R)\neq0\}
	\end{equation*}
	by \cite[Theorem 9.6]{iyengarTwentyFourHoursLocal2007}.

	We will make use of the functor $\prescript{\ast\hspace{-0.1cm}}{}{\mathscr{H}}$ defined in \cite[Theorem 4.2]{lyubeznikLocalCohomologyModules2016}. This is the graded counterpart to the functor $\mathscr{H}$ defined in \cite[Theorem 4.2]{lyubeznikFmodulesApplicationsLocal1997}.

	By \cite[Proposition 2.8]{lyubeznikLocalCohomologyModules2016}, we have that
	\begin{equation*}
		\prescript{\ast\hspace{-0.1cm}}{}{\mathscr{H}}(H_\mathfrak{m}^i(R/\mathcal{J}(\overline{G})))\cong H_{\mathcal{J}(\overline{G})}^{2n-i}(R)
	\end{equation*}
	Now, Frobenius is injective on $H_\mathfrak{m}^i(R/\mathcal{J}(\overline{G}))$ by \cite[Corollary 3.6]{destefaniFrobeniusMethodsCombinatorics2022}, and so the first claim follows by \cite[Theorem 2.5~(2)]{lyubeznikLocalCohomologyModules2016} and \cref{MainGCThm}.

	We have
	\begin{equation*}
		\ara_R(\mathcal{J}(\overline{G}))\leq2n
	\end{equation*}
	by \cite[Theorem 9.13 \& Remark 9.14]{iyengarTwentyFourHoursLocal2007}, and
	\begin{equation*}
		2n-d\leq\ara_R(\mathcal{J}(\overline{G}))
	\end{equation*}
	by \cite[Proposition 9.12]{iyengarTwentyFourHoursLocal2007}, since we have shown that the cohomological dimension is $2n-d$.
\end{proof}

\begin{note}
	At the time of writing, we do not know if \cref{GCPrimeCharCorr} holds in characteristic $0$.
\end{note}

\pagebreak

\subsubsection*{Computing the Regularity}

In order to compute $\reg_R(R/\mathcal{J}(\overline{G}))$, we will first calculate $\reg_R(R/\mathfrak{q})$ for each $\mathfrak{q}\in\mathcal{Q}_{\mathcal{J}(\overline{G})}$ appearing in the local cohomology modules of \cref{MainGCThm}, then show that we can use these calculations to determine $\reg_R(R/\mathcal{J}(\overline{G}))$ itself.

We will need a few results about regularity before we can compute the $\reg_R(R/\mathfrak{q})$, these are not difficult, but we include proofs for completeness:

\begin{proposition}\label{ExtraVarReg}
	Let $R$ be a polynomial ring over a field with the standard grading, $I$ a homogeneous ideal of $R$, and $z$ an indeterminate. Then
	\begin{equation*}
		\reg_{R[z]}(R[z]/(IR[z]+(z)))=\reg_R(R/I)
	\end{equation*}
\end{proposition}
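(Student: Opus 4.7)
The plan is to combine a short exact sequence with a flat base change. Set $S\defeq R[z]$ and observe that $S/IS\cong(R/I)[z]$, so $z$ is a non-zerodivisor on $S/IS$, yielding the short exact sequence
\begin{equation*}
0\to(S/IS)(-1)\xra{z}S/IS\to S/(IS+(z))\to0
\end{equation*}
of finitely generated graded $S$-modules.

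Applying the standard regularity bounds for a short exact sequence $0\to A\to B\to C\to0$, namely $\reg_S(C)\leq\max(\reg_S(A)-1,\reg_S(B))$ and $\reg_S(A)\leq\max(\reg_S(B),\reg_S(C)+1)$, to the sequence above, and using that $\reg_S((S/IS)(-1))=\reg_S(S/IS)+1$, the first bound forces $\reg_S(S/(IS+(z)))\leq\reg_S(S/IS)$, while the second forces $\reg_S(S/IS)+1\leq\max(\reg_S(S/IS),\reg_S(S/(IS+(z)))+1)$ and hence $\reg_S(S/IS)\leq\reg_S(S/(IS+(z)))$. Together these give $\reg_S(S/(IS+(z)))=\reg_S(S/IS)$.

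It remains only to observe that $\reg_S(S/IS)=\reg_R(R/I)$: since $S$ is flat over $R$, tensoring the minimal graded free resolution $F_\bullet\to R/I$ over $R$ with $S$ produces a graded free resolution of $S/IS$ over $S$ with identical Betti numbers, and it is still minimal because the entries of its differentials remain in the graded maximal ideal of $S$. No step here is a genuine obstacle; the argument is essentially a bookkeeping exercise with the standard regularity bounds together with the observation that adjoining an indeterminate and killing it leaves the resolution (hence the regularity) untouched.
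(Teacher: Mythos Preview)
Your proof is correct but takes a different route from the paper. The paper simply observes that $R[z]/(IR[z]+(z))\cong R/I$ as $R[z]$-modules (with $z$ acting by zero) and then invokes \cite[Corollary~4.6]{eisenbudGeometrySyzygiesSecond2005}, which says that regularity is unchanged when a finitely generated graded $R$-module is viewed over the polynomial extension $R[z]$. You instead pass through the intermediate module $S/IS$: first you use the short exact sequence coming from multiplication by the regular element $z$, together with the standard regularity bounds, to show $\reg_S(S/(IS+(z)))=\reg_S(S/IS)$, and then you identify $\reg_S(S/IS)=\reg_R(R/I)$ by flat base change of the minimal free resolution. That last step is precisely the content of the paper's next proposition (\cref{FreeVarReg}), so in effect you have folded that result into the present argument. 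Your approach is more self-contained, since it avoids the external citation and makes explicit the general principle that quotienting by a linear non-zerodivisor preserves regularity; the paper's approach is shorter and more direct, since it never needs the short exact sequence at all.
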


\begin{proof}
	Note that
	\begin{equation*}
		R[z]/(IR[z]+(z))\cong R/I
	\end{equation*}
	as $R[z]$-modules if we let $z$ act on $R/I$ by killing any element. Then the result follows immediately from applying \cite[Corollary 4.6]{eisenbudGeometrySyzygiesSecond2005} to $R/I$ and $R\hookrightarrow R[z]$.
\end{proof}

\begin{proposition}\label{FreeVarReg}
	Let $R$ be a polynomial ring over a field with the standard grading, $I$ a homogeneous ideal of $R$, and $z$ an indeterminate. Then
	\begin{equation*}
		\reg_{R[z]}(R[z]/IR[z])=\reg_R(R/I)
	\end{equation*}
\end{proposition}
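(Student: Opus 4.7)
The plan is to reduce this to \cref{ExtraVarReg} by exploiting the fact that the new indeterminate $z$ is a non-zerodivisor on $R[z]/IR[z]$, so we can ``kill it'' without changing the regularity.

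First I would observe that $R[z]/IR[z] \cong (R/I)[z]$ as graded $R[z]$-modules, so $z$ acts as a non-zerodivisor of degree $1$ on $M \defeq R[z]/IR[z]$. This gives a short exact sequence of graded $R[z]$-modules
\begin{equation*}
    0 \to M(-1) \xrightarrow{\,\cdot z\,} M \to M/zM \to 0
\end{equation*}
where $M/zM = R[z]/(IR[z]+(z))$.

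Next I would invoke the standard fact (see, e.g., \cite[Corollary 20.19~(d)]{eisenbudCommutativeAlgebraView1995} or derive it directly from the long exact sequence in local cohomology, together with the relationship between regularity and local cohomology) that if $\ell$ is a linear form which is a non-zerodivisor on a finitely generated graded module $M$ over a standard graded polynomial ring, then $\reg(M) = \reg(M/\ell M)$. Applying this to $\ell = z$ yields
\begin{equation*}
    \reg_{R[z]}(R[z]/IR[z]) = \reg_{R[z]}(R[z]/(IR[z]+(z)))
\end{equation*}

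Finally, \cref{ExtraVarReg} gives $\reg_{R[z]}(R[z]/(IR[z]+(z))) = \reg_R(R/I)$, and composing the two equalities yields the result.

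The only mild subtlety is justifying the step $\reg(M) = \reg(M/zM)$ for a non-zerodivisor linear form, but this is entirely standard and requires no input from the rest of the paper; once it is cited, the remainder is a direct application of the previous proposition.
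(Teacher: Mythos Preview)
Your proof is correct, but it takes a different route from the paper's. The paper argues directly via free resolutions: since $R[z]$ is free (hence flat) over $R$, a minimal graded free resolution of $R/I$ over $R$ remains minimal and exact after tensoring with $R[z]$, and because the gradings are compatible the graded Betti numbers---and hence the regularity---are unchanged. This is a one-line argument that does not invoke \cref{ExtraVarReg} at all. Your approach instead kills the new variable using the standard fact that regularity is preserved when quotienting by a linear non-zerodivisor, and then appeals to \cref{ExtraVarReg} to finish. Both are perfectly valid; the paper's route is shorter and entirely self-contained, while yours has the minor virtue of reducing \cref{FreeVarReg} to \cref{ExtraVarReg} plus a standard lemma, at the cost of importing that lemma.
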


\begin{proof}
	A minimal graded free resolution of $R/I$ over $R$ remains so over $R[z]$, since $R[z]$ is free (and therefore flat) over $R$, and its grading is compatible with that of $R$.
\end{proof}

We will make use of the following notation:

\begin{notation}
	Let $1\leq i\leq n$, and let $G$ be a graph with vertices $1,\ldots,j$ for some $j\leq i$. Then we set
		\begin{equation*}
		R'_i\defeq k[x_1,\ldots,x_i,y_1,\ldots,y_i]
	\end{equation*}
	and denote by $\mathcal{J}_i(G)$ the binomial edge ideal of $G$ in $R'_i$.
\end{notation}

\begin{proposition}\label{CompleteGraphRegLemma}
	For any $m\geq2$, we have
	\begin{equation*}
		\reg_{R'_m}\hspace{-0.05cm}(R'_m/\mathcal{J}_m\hspace{-0.05cm}(K_m))=1
	\end{equation*}
\end{proposition}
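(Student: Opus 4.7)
The plan is to observe that this is essentially a notational restatement of \cref{CompleteGraphReg}. That earlier proposition establishes $\reg_R(R/\mathcal{J}(K_n)) = 1$ for the polynomial ring $R = k[x_1,\ldots,x_n,y_1,\ldots,y_n]$ with $n \geq 2$, and its proof cites \cite[Theorem 2.1]{kianiBinomialEdgeIdeals2012} together with the standard identity $\reg_R(R/I) = \reg_R(I) - 1$ for homogeneous ideals.

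First, I would note that $R'_m = k[x_1,\ldots,x_m,y_1,\ldots,y_m]$ is itself a polynomial ring of the same form as $R$, just with $m$ indeterminate pairs rather than $n$, and the ideal $\mathcal{J}_m(K_m)$ is the binomial edge ideal of the complete graph $K_m$ inside $R'_m$. The hypothesis $m \geq 2$ is exactly the condition imposed on $n$ in the original setup, so all hypotheses of \cref{CompleteGraphReg} are satisfied by $R'_m$ and $K_m$.

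Therefore the proof consists of a single sentence: apply \cref{CompleteGraphReg} with $R$ replaced by $R'_m$ and $n$ replaced by $m$. There is no real obstacle here, as the result and its proof are index-independent; the proposition is being restated in this form purely because later arguments will need the regularity of $\mathcal{J}_m(K_m)$ computed inside the smaller ring $R'_m$ rather than in the ambient ring $R$ (which matters because, by \cref{FreeVarReg}, the regularity is unchanged when passing from $R'_m$ to $R$, but in some subsequent manipulations we will want the intrinsic value before extension).
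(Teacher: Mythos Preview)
Your proposal is correct and matches the paper's approach: the paper's proof of this proposition is essentially a verbatim repetition of the proof of \cref{CompleteGraphReg} with $R'_m$ in place of $R$, citing the same source \cite[Remark 3.3]{kianiBinomialEdgeIdeals2012} and the same identity $\reg(R/I)=\reg(I)-1$. Your observation that one could simply invoke \cref{CompleteGraphReg} directly is entirely valid and, if anything, slightly tidier than the paper's restatement.
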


\begin{proof}
	Since
	\begin{equation*}
		\reg_{R'_m}\hspace{-0.05cm}(\mathcal{J}_m(K_m))=2
	\end{equation*}
	(see for example \cite[Remark 3.3]{kianiBinomialEdgeIdeals2012}), the result is immediate from the fact that
	\begin{equation*}
		\reg_{R'_m}\hspace{-0.05cm}(R/I)=\reg_{R'_m}\hspace{-0.05cm}(I)-1
	\end{equation*}
	for any (homogeneous) ideal $I$ of $R'_m$.
\end{proof}

\pagebreak

We can now compute the necessary $\reg_R(R/\mathfrak{q})$:

\begin{proposition}\label{RegProp}
	We have
	\begin{equation*}
		\reg_R(R/\mathfrak{q})=\begin{cases}
			1 & \text{\normalfont for $\mathfrak{q}\in\{\mathfrak{j},\mathfrak{a}_v,\mathfrak{b}_v,\mathfrak{d}_{\{v,w\}},\mathfrak{b}'_{\{v,w\}}\}$}\\
			0 & \text{\normalfont for $\mathfrak{q}\in\{\mathfrak{c}_{\{v,w\}},\mathfrak{a}'_{\{v,w\}}\}$}
		\end{cases}
	\end{equation*}
\end{proposition}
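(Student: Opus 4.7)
The plan is to handle each ideal in the list by explicitly identifying the quotient $R/\mathfrak{q}$, then combining \cref{ExtraVarReg}, \cref{FreeVarReg}, and \cref{CompleteGraphRegLemma} (plus the obvious fact that a polynomial ring has regularity $0$ over itself) to read off the regularity. The key observation is that every one of these ideals is of the form ``kill a block of $x_i, y_i$ variables, then impose either nothing, a single determinantal relation, or the binomial edge ideal of a complete graph on the remaining indices.''

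First I would dispose of the cases that reduce to a complete-graph binomial edge ideal: for $\mathfrak{j} = \mathcal{J}(K_n)$ the result is immediate from \cref{CompleteGraphRegLemma}. For $\mathfrak{b}_v$, noting that $\mathfrak{b}_v$ differs from $\mathcal{J}(K_{N_G[v]})$ only by the monomial generators $(x_i, y_i : i \notin N_G[v])$, iteratively apply \cref{ExtraVarReg} to strip off those pairs of variables; what remains is $R'_{\smallAbs{N_G[v]}}/\mathcal{J}_{\smallAbs{N_G[v]}}(K_{\smallAbs{N_G[v]}})$, which has regularity $1$ by \cref{CompleteGraphRegLemma}. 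For $\mathfrak{a}_v$, the same stripping leaves $\mathcal{J}(K_{N_G(v)})$ inside the ring $k[x_i, y_i : i \in N_G[v]]$; since $v \notin N_G(v)$, the variables $x_v, y_v$ are free on the quotient, so \cref{FreeVarReg} reduces us to the regularity of $R'_{\smallAbs{N_G(v)}}/\mathcal{J}_{\smallAbs{N_G(v)}}(K_{\smallAbs{N_G(v)}})$, which is again $1$ by \cref{CompleteGraphRegLemma}. (Here $\smallAbs{N_G(v)} \geq 2$ since $v \in V(H)$, so the hypotheses of \cref{CompleteGraphRegLemma} are satisfied.)

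Next, for $\mathfrak{d}_{\{v,w\}}$ and $\mathfrak{b}'_{\{v,w\}}$ — which are literally the same ideal $(x_i, y_i : i \notin \{v,w\}) + (\delta_{v,w})$ — strip the monomial variables using \cref{ExtraVarReg} to reduce to $k[x_v,y_v,x_w,y_w]/(\delta_{v,w}) = R'_2/\mathcal{J}_2(K_2)$, whose regularity is $1$ by \cref{CompleteGraphRegLemma}. Finally, for $\mathfrak{c}_{\{v,w\}}$ and $\mathfrak{a}'_{\{v,w\}}$, the quotient is simply the polynomial ring $k[x_v,y_v,x_w,y_w]$; applying \cref{ExtraVarReg} with $I = 0$ and iterating over the variables we kill gives regularity $0$.

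There is essentially no obstacle here — the only thing to be careful about is the bookkeeping of which variables are being killed versus left free versus participating in the binomial relations, and matching each case to the correct one of \cref{ExtraVarReg} (kill a variable) or \cref{FreeVarReg} (adjoin a free variable). Once the quotient is identified in a clean form, the regularity is handed to us by \cref{CompleteGraphRegLemma}.
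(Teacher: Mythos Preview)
Your proposal is correct and follows essentially the same approach as the paper: identify each quotient $R/\mathfrak{q}$ explicitly, strip off the killed variable pairs via \cref{ExtraVarReg}, peel off any free variables via \cref{FreeVarReg}, and invoke \cref{CompleteGraphRegLemma} (or the trivial regularity of a polynomial ring) on what remains. The paper organises the cases slightly differently (relabelling vertices so that $N_G(v)=\{1,\ldots,m\}$ and handling $\mathfrak{a}'_{\{v,w\}}$, $\mathfrak{b}'_{\{v,w\}}$ by observing their quotients are isomorphic to those of $\mathfrak{c}_{\{v,w\}}$, $\mathfrak{d}_{\{v,w\}}$), but the substance is identical.
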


\begin{proof}\mbox{}
	\newlength{\mylength}\maxlength{\ref{jReg},\ref{aReg},\ref{bReg},\ref{cReg},\ref{dReg},\ref{aPrimeReg},\ref{bPrimeReg}}{\mylength}
	\begin{enumerate}[labelindent=0pt,labelwidth=\mylength,leftmargin=!]
		\myitem{$\mathfrak{j}$:}\label{jReg} We immediately have $\reg_R(R/\mathfrak{j})=1$ by \cref{CompleteGraphRegLemma}.
	\end{enumerate}

	Suppose that we have $v\in V(H)$, and set $m=\smallAbs{N_G(v)}$. We may relabel the vertices of $G$ so that $N_G(v)=\{1,\ldots,m\}$ and $v=m+1$. Then:
	\begin{enumerate}[labelindent=0pt,labelwidth=\mylength,leftmargin=!]
		\myitem{$\mathfrak{a}_v$:}\label{aReg} Note that
		\begin{equation*}
			R/\mathfrak{a}_v\cong R'_{m+1}/\mathcal{J}_{m+1}(K_m)\cong(R'_m/\mathcal{J}_m(K_m))[x_{m+1},y_{m+1}]
		\end{equation*}
		We have
		\begin{equation*}
			\reg_{R'_m}\hspace{-0.05cm}(R'_m/\mathcal{J}_m(K_m))=1
		\end{equation*}
		by \cref{CompleteGraphRegLemma}. Next, we have
		\begin{equation*}
			\reg_{R'_{m+1}}\hspace{-0.05cm}(R'_{m+1}/\mathcal{J}_{m+1}(K_m))=\reg_{R'_{m+1}}\hspace{-0.05cm}((R'_m/\mathcal{J}_m(K_m))[x_{m+1},y_{m+1}])=1
		\end{equation*}
		with the second equality following by \cref{FreeVarReg}, and so applying \cref{ExtraVarReg} we obtain $\reg_R(R/\mathfrak{a}_v)=1$.
		\myitem{$\mathfrak{b}_v$:}\label{bReg} In this case, we have
		\begin{equation*}
			R/\mathfrak{b}_v\cong R'_{m+1}/\mathcal{J}_{m+1}(K_{m+1})
		\end{equation*}
		and
		\begin{equation*}
			\reg_{R'_{m+1}}\hspace{-0.05cm}(R'_{m+1}/\mathcal{J}_{m+1}(K_{m+1}))=1
		\end{equation*}
		by \cref{CompleteGraphRegLemma}, and so, again by applying \cref{ExtraVarReg}, we have $\reg_R(R/\mathfrak{b}_v)=1$.
	\end{enumerate}
	Now suppose that we have some $\{v,w\}\in E(H)$, and relabel the vertices of $G$ again so that $v=1$ and $w=2$. Then:
	\begin{enumerate}[labelindent=0pt,labelwidth=\mylength,leftmargin=!]
		\myitem{$\mathfrak{c}_{\{v,w\}}$:}\label{cReg} We have $R/\mathfrak{c}_{\{v,w\}}\cong R'_2$, and $R'_2$ clearly has regularity $0$ over itself, so by \cref{ExtraVarReg} we have $\reg_R(R/\mathfrak{c}_{\{v,w\}})=0$.
		\myitem{$\mathfrak{d}_{\{v,w\}}$:}\label{dReg} We have
		\begin{equation*}
			R/\mathfrak{d}_{\{v,w\}}\cong R'_2/\mathcal{J}_2(K_2)
		\end{equation*}
		This has regularity $1$ over $R'_2$ by \cref{CompleteGraphRegLemma}, and so, as before by \cref{ExtraVarReg}, we have $\reg_R(R/\mathfrak{d}_{\{v,w\}})=1$.
	\end{enumerate}
	Finally, suppose that we have some $\{v',w'\}\in F(G)$. Then:
	\begin{enumerate}[labelindent=0pt,labelwidth=\mylength,leftmargin=!]
		\myitem{$\mathfrak{a}'_{\{v',w'\}}$:}\label{aPrimeReg} Since $R/\mathfrak{a}'_{\{v',w'\}}\cong R/\mathfrak{c}_{\{v,w\}}$, we have $\reg_R(R/\mathfrak{a}'_{\{v',w'\}})=0$.
		\myitem{$\mathfrak{b}'_{\{v',w'\}}$:}\label{bPrimeReg} Since $R/\mathfrak{b}'_{\{v',w'\}}\cong R/\mathfrak{d}_{\{v,w\}}$, we have $\reg_R(R/\mathfrak{b}'_{\{v',w'\}})=1$.\qedhere
	\end{enumerate}
\end{proof}

We will make use of the following characterisation of regularity, beginning with a definition:

\begin{definition}
	Let $R$ be a polynomial ring over a field with the standard grading, and $M$ a $\mathbb{Z}$-graded $R$-module. Let $M_d$ denote the $d$\textsuperscript{th} graded component of $M$. Then we set
	\begin{equation*}
		\lastGrade_R(M)\defeq\max\{d\in\mathbb{Z}:M_d\neq0\}
	\end{equation*}
	with $\lastGrade_R(M)=-\infty$ if $M=0$, or $\lastGrade_R(M)=\infty$ if no such $d$ exists.
\end{definition}

\begin{lemma}\label{RegLCChar}
	For any finitely generated graded $R$-module $M$, we have
	\begin{equation*}
		\reg_R(M)=\max\{\lastGrade_R(H_\mathfrak{m}^i(M))+i:i\geq 0\}
	\end{equation*}
\end{lemma}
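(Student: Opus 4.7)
The plan is to establish the equivalence between the two classical formulations of Castelnuovo-Mumford regularity: the definition via graded Betti numbers that has been used implicitly throughout this paper, and the top-degree-of-local-cohomology formulation appearing in the statement. The bridge between them is graded local duality over the Gorenstein graded polynomial ring $R$.

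Setting $N = 2n$ for the Krull dimension of $R$, local duality provides, for each $i \geq 0$, a natural graded $R$-module isomorphism
\begin{equation*}
    H_\mathfrak{m}^i(M) \cong \operatorname{Hom}_k(\operatorname{Ext}_R^{N-i}(M, R(-N)), k)
\end{equation*}
The functor $\operatorname{Hom}_k(-, k)$ reverses the grading, and so
\begin{equation*}
    \lastGrade_R(H_\mathfrak{m}^i(M)) = -\inf\{d \in \mathbb{Z} : \operatorname{Ext}_R^{N-i}(M, R(-N))_d \neq 0\}
\end{equation*}
with the usual $\pm\infty$ conventions when the module in question vanishes.

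I would then compute these Ext modules from a minimal graded free resolution $F_\bullet \to M$ of the form $F_p = \bigoplus_j R(-j)^{\beta_{p,j}(M)}$. The $p$-th term of $\operatorname{Hom}_R(F_\bullet, R(-N))$ is $\bigoplus_j R(j-N)^{\beta_{p,j}(M)}$, whose smallest nonzero graded piece sits in degree $N - \max\{j : \beta_{p,j}(M) \neq 0\}$. Since $\operatorname{Ext}_R^p(M, R(-N))$ is a subquotient of this term, its indegree is at least this value, which after substitution $p = N - i$ and the sign reversal above yields
\begin{equation*}
    \lastGrade_R(H_\mathfrak{m}^i(M)) + i \leq \max\{j - p : \beta_{p,j}(M) \neq 0,\ p = N - i\}
\end{equation*}
Taking the maximum over $i$ bounds the left-hand side of the claimed formula by $\reg_R(M)$. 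The reverse inequality comes from the standard extremal Betti number argument: at a homological degree $p$ and twist $j$ that maximise $j - p$, minimality of $F_\bullet$ ensures that the generator of the corresponding summand in $\operatorname{Hom}_R(F_p, R(-N))$ survives to $\operatorname{Ext}_R^p(M, R(-N))$ without cancellation, giving equality at that $i$.

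The main technical obstacle is precisely this no-cancellation step in the reverse direction, which rests on the minimality of $F_\bullet$ combined with the theory of extremal Betti numbers (Bayer-Charalambous-Popescu). Since this entire equivalence is an established classical result, in practice I would simply cite it, for example as Proposition 4.16 of Eisenbud's \textit{Geometry of Syzygies} or Theorem 3.6.4 of Bruns-Herzog's \textit{Cohen-Macaulay Rings} (combined with their local duality theorem), rather than reprove it from scratch in this paper.
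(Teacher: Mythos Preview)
Your proposal is correct and aligns with the paper's approach: the paper simply cites \cite[Corollary 4.5]{eisenbudGeometrySyzygiesSecond2005} for this standard equivalence, and you likewise conclude by citing Eisenbud's \emph{Geometry of Syzygies} (or Bruns--Herzog) after a brief sketch of the local-duality argument. The only difference is that you supply more of the underlying reasoning than the paper does, which is harmless but unnecessary here.
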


\begin{proof}
	See, for example, \cite[Corollary 4.5]{eisenbudGeometrySyzygiesSecond2005}.
\end{proof}

\pagebreak

To calculate $R/\mathcal{J}(\overline{G})$, we will prove a general result:

\begin{theorem}\label{RegCalcThm}
	Let $G$ be \emph{any} graph. Then, with notation as in \cref{AMMainTheorem}, we have
	\begin{equation*}
		\reg_R(R/\mathcal{J}(G))=\max_{\mathclap{\substack{\vspace{0.025cm}\\\mathfrak{q}\in\mathcal{Q}_{\mathcal{J}(G)}\\\vspace{-0.075cm}\\r\geq0}}}\{\reg_R(R/\mathfrak{q})-d_\mathfrak{q}+r:M_{r,\mathfrak{q}}\neq0\}
	\end{equation*}
\end{theorem}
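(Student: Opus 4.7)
The plan is to combine the graded Hochster-type decomposition of \cref{AMMainTheorem} with the characterisation of regularity via local cohomology given in \cref{RegLCChar}, exploiting the fact (noted after \cref{AMPosetDef}) that every $\mathfrak{q}\in\mathcal{Q}_{\mathcal{J}(G)}$ is Cohen-Macaulay.

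First I would note that by \cref{RegLCChar},
\begin{equation*}
	\reg_R(R/\mathcal{J}(G))=\max_{r\geq0}\{\lastGrade_R(H_\mathfrak{m}^r(R/\mathcal{J}(G)))+r\}
\end{equation*}
so it suffices to compute $\lastGrade_R(H_\mathfrak{m}^r(R/\mathcal{J}(G)))$ for each $r$. Since the decomposition in \cref{AMMainTheorem} is an isomorphism of graded $k$-vector spaces, and passing to a direct sum of copies of a graded vector space does not affect its top nonzero degree, we have
\begin{equation*}
	\lastGrade_R(H_\mathfrak{m}^r(R/\mathcal{J}(G)))=\max_{\substack{\mathfrak{q}\in\mathcal{Q}_{\mathcal{J}(G)}\\ M_{r,\mathfrak{q}}\neq 0}}\lastGrade_R(H_\mathfrak{m}^{d_\mathfrak{q}}(R/\mathfrak{q}))
\end{equation*}
where we interpret the maximum over the empty set as $-\infty$ (in accordance with the convention $\lastGrade_R(0)=-\infty$).

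Next I would use the Cohen-Macaulayness of each $R/\mathfrak{q}$: since $\dim(R/\mathfrak{q})=d_\mathfrak{q}$ and $R/\mathfrak{q}$ is Cohen-Macaulay, $H_\mathfrak{m}^i(R/\mathfrak{q})=0$ for all $i\neq d_\mathfrak{q}$. Applying \cref{RegLCChar} to $R/\mathfrak{q}$ therefore gives
\begin{equation*}
	\reg_R(R/\mathfrak{q})=\lastGrade_R(H_\mathfrak{m}^{d_\mathfrak{q}}(R/\mathfrak{q}))+d_\mathfrak{q}
\end{equation*}
so $\lastGrade_R(H_\mathfrak{m}^{d_\mathfrak{q}}(R/\mathfrak{q}))=\reg_R(R/\mathfrak{q})-d_\mathfrak{q}$.

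Finally, substituting this into the previous two displays yields
\begin{equation*}
	\reg_R(R/\mathcal{J}(G))=\max_{r\geq0}\max_{\substack{\mathfrak{q}\in\mathcal{Q}_{\mathcal{J}(G)}\\ M_{r,\mathfrak{q}}\neq 0}}\{\reg_R(R/\mathfrak{q})-d_\mathfrak{q}+r\}
\end{equation*}
which is the stated formula. There is no real obstacle here; the only subtle point to verify is that the isomorphism in \cref{AMMainTheorem} genuinely preserves the grading (so that $\lastGrade$ is computed summand-by-summand), which is precisely what \`{A}lvarez Montaner asserts. The Cohen-Macaulay hypothesis is essential: without it, the contribution of $\mathfrak{q}$ could involve $\lastGrade_R(H_\mathfrak{m}^{d_\mathfrak{q}}(R/\mathfrak{q}))$ rather than the full regularity, and the clean expression above would fail.
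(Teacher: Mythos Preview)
Your proof is correct and follows essentially the same argument as the paper: both combine \cref{RegLCChar} with the graded decomposition of \cref{AMMainTheorem}, use the Cohen-Macaulayness of each $R/\mathfrak{q}$ to identify $\lastGrade_R(H_\mathfrak{m}^{d_\mathfrak{q}}(R/\mathfrak{q}))=\reg_R(R/\mathfrak{q})-d_\mathfrak{q}$, and then take the maximum over $r$ and $\mathfrak{q}$. The only difference is the order of presentation.
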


\begin{proof}
	By \cref{AMMainTheorem}, we have
	\begin{equation*}
		H_\mathfrak{m}^r(R/\mathcal{J}(G))\cong\bigoplus_{\mathclap{\mathfrak{q}\in\mathcal{Q}_{\mathcal{J}(G)}}}H_\mathfrak{m}^{d_\mathfrak{q}}(R/\mathfrak{q})^{M_{r,\mathfrak{q}}}
	\end{equation*}
	for all $r\geq0$.

	Since each $\mathfrak{q}\in\mathcal{Q}_{\mathcal{J}(G)}$ is Cohen-Macaulay, $R/\mathfrak{q}$ has only a single non-vanishing local cohomology module $H_\mathfrak{m}^{d_\mathfrak{q}}(R/\mathfrak{q})$, and so, by \cref{RegLCChar}, we have
	\begin{equation*}
		\lastGrade_R(H_\mathfrak{m}^{d_\mathfrak{q}}(R/\mathfrak{q}))=\reg_R(R/\mathfrak{q})-d_\mathfrak{q}
	\end{equation*}
	Then
	\begin{equation*}
		\lastGrade_R(H_\mathfrak{m}^r(R/\mathcal{J}(G)))=\max_{\mathclap{\substack{\vspace{0.025cm}\\\mathfrak{q}\in\mathcal{Q}_{\mathcal{J}(G)}}}}\{\lastGrade_R(H_\mathfrak{m}^{d_\mathfrak{q}}(R/\mathfrak{q})):M_{r,\mathfrak{q}}\neq0\}=\max_{\mathclap{\substack{\vspace{0.025cm}\\\mathfrak{q}\in\mathcal{Q}_{\mathcal{J}(G)}}}}\{\reg_R(R/\mathfrak{q})-d_\mathfrak{q}:M_{r,\mathfrak{q}}\neq0\}
	\end{equation*}
	so, again by \cref{RegLCChar}, we have
	\begin{align*}
		\reg_R(R/\mathcal{J}(G))&=\max\{\lastGrade_R(H_\mathfrak{m}^r(R/\mathcal{J}(G)))+r:r\geq 0\}\\
		&=\max_{\mathclap{\substack{\vspace{0.025cm}\\\mathfrak{q}\in\mathcal{Q}_{\mathcal{J}(G)}\\\vspace{-0.075cm}\\r\geq0}}}\{\reg_R(R/\mathfrak{q})-d_\mathfrak{q}+r:M_{r,\mathfrak{q}}\neq0\}
	\end{align*}
	as desired.
\end{proof}

\begin{corollary}
	We have
	\begin{equation*}
		\reg_R(R/\mathcal{J}(\overline{G}))=\begin{cases}
			2 & \text{\normalfont if $E(H)=\varnothing$}\\
			3 & \text{\normalfont otherwise}
		\end{cases}
	\end{equation*}
\end{corollary}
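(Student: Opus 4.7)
The plan is a direct application of \cref{RegCalcThm}: since we already know $\reg_R(R/\mathfrak{q})$ for every prime $\mathfrak{q}$ contributing a non-vanishing summand in \cref{MainGCThm} (by \cref{RegProp}), their dimensions $d_\mathfrak{q}$ (by \cref{DimProp}), and the indices $r$ at which each contributes, we simply tabulate the quantity $\reg_R(R/\mathfrak{q}) - d_\mathfrak{q} + r$ for each contributing pair $(\mathfrak{q}, r)$ and take the maximum.

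Going through the seven families appearing in \cref{MainGCThm}:
\begin{align*}
	\mathfrak{j}&:\ 1 - (n+1) + (n+1) = 1,\\
	\mathfrak{a}_v&:\ 1 - (\smallAbs{N_G[v]}+2) + (\smallAbs{N_G[v]}+2) = 1,\\
	\mathfrak{a}'_{\{v,w\}}&:\ 0 - 4 + 4 = 0,\\
	\mathfrak{b}_v&:\ 1 - (\smallAbs{N_G[v]}+1) + (\smallAbs{N_G[v]}+2) = 2,\\
	\mathfrak{b}'_{\{v,w\}}&:\ 1 - 3 + 4 = 2,\\
	\mathfrak{c}_{\{v,w\}}&:\ 0 - 4 + 5 = 1,\\
	\mathfrak{d}_{\{v,w\}}&:\ 1 - 3 + 5 = 3.
\end{align*}
Of these, only the $\mathfrak{c}_{\{v,w\}}$ and $\mathfrak{d}_{\{v,w\}}$ families depend on $E(H)$ being non-empty.

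If $E(H) \neq \varnothing$, then some $\mathfrak{d}_{\{v,w\}}$ contributes a non-vanishing summand to $H_\mathfrak{m}^5(R/\mathcal{J}(\overline{G}))$, yielding the value $3$, which dominates all other entries; hence $\reg_R(R/\mathcal{J}(\overline{G})) = 3$. If $E(H) = \varnothing$, neither $\mathfrak{c}_{\{v,w\}}$ nor $\mathfrak{d}_{\{v,w\}}$ appear, so the maximum is $2$. This maximum is actually attained: under \hyperref[GAssumptions]{our standing assumptions} $G$ has at least one edge, so either some vertex has degree $\geq 2$, giving $V(H) \neq \varnothing$ and a contributing $\mathfrak{b}_v$, or else every edge of $G$ is free and $F(G) \neq \varnothing$ provides a contributing $\mathfrak{b}'_{\{v,w\}}$; in both sub-cases we get the value $2$. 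No step is really hard; the only thing to verify carefully is this last point that the value $2$ is genuinely attained whenever $E(H) = \varnothing$, which follows from the standing hypotheses on $G$.
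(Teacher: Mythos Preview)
Your proof is correct and follows essentially the same approach as the paper: both apply \cref{RegCalcThm} by tabulating $\reg_R(R/\mathfrak{q})-d_\mathfrak{q}+r$ for every contributing pair $(\mathfrak{q},r)$ from \cref{MainGCThm}, using \cref{DimProp} and \cref{RegProp}, and then observe that under the standing assumptions at least one of $V(H)$ or $F(G)$ is non-empty so that the value $2$ is genuinely attained when $E(H)=\varnothing$. Your computations agree with the paper's table, and your justification that some $\mathfrak{b}_v$ or $\mathfrak{b}'_{\{v,w\}}$ must contribute is exactly the paper's observation phrased slightly more explicitly.
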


\begin{proof}
	By \cref{DimProp}, \cref{RegProp}, and \cref{MainGCThm}, we have
	\begin{center}
		\begin{tblr}{colspec={c|c|c|c},hline{2} = {1}{-}{solid},hline{2} = {2}{-}{solid},vline{2-4} = {abovepos = 1, belowpos = 1},stretch=1.25}
			$r$ & $\mathfrak{q}$  & $\reg_R(R/\mathfrak{q})-d_\mathfrak{q}+r$ & $M_{r,\mathfrak{q}}\neq0$ \\
			\SetCell[r=2]{c} $4$ & $\mathfrak{a}'_{\{v,w\}}$ & $0$ & \SetCell[r=2]{c} If $F(G)\neq\varnothing$ \\
			\hline
			& $\mathfrak{b}'_{\{v,w\}}$ & $2$ & \\
			\hline
			\SetCell[r=2]{c} $5$ & $\mathfrak{c}_{\{v,w\}}$ & $1$ & \SetCell[r=2]{c} If $E(H)\neq\varnothing$ \\
			\hline
			& $\mathfrak{d}_{\{v,w\}}$ & $3$ & \\
			\hline
			\SetCell[r=2]{c} $i$ & $\mathfrak{a}_v$ & $1$ & \SetCell[r=2]{c} {For $i\in\{\smallAbs{N_G[v]}+2:v\in V(H)\}$} \\
			\hline
			& $\mathfrak{b}_v$ & $2$ & \\
			\hline
			$n+1$ & $\mathfrak{j}$ & $1$ & Always
		\end{tblr}
	\end{center}
	We noted in the proof of \cref{MainGCCorr} that at least one of $F(G)$ and $V(H)$ must be non-empty, and so the result follows from \cref{RegCalcThm}.
\end{proof}

\newpage
\printbibliography

\end{document}